\newcommand{\Sum}{\displaystyle \sum}
\newcommand{\C}{\mathbb{C}_{\infty}}
\newcommand{\K}{K_{\infty}}
\newcommand{\N}{\mathbb{N}}
\newcommand{\Fq}{\mathbb{F}_{q}}
\newcommand{\F}{\mathbb{F}}
\newcommand{\Q}{\mathbb{Q}}
\newcommand{\Z}{\mathbb{Z}}
\newcommand{\R}{\mathbb{R}}
\newcommand{\Ii}{\mathcal{I}_\infty}
\newcommand{\Pj}{\mathbb{P}}
\newcommand{\T}{\mathcal{T}}
\newcommand{\Ht}{\underline{H}}
\newcommand{\Hzo}{\underline{H}_{!}(\T,\Z)^\Gamma}
\renewcommand{\O}{O_{\infty}}
\newcommand{\Gl}{\mathrm{GL}_2}
\newcommand{\Prod}{\displaystyle \prod}
\DeclareMathOperator{\card}{\mathrm{card}}
\DeclareMathOperator{\Tor}{\mathrm{Tor}}
\DeclareMathOperator{\pgcd}{\mathrm{gcd}}
\DeclareMathOperator{\Hom}{\mathrm{Hom}}
\theoremstyle{plain}
\newtheorem{def }{Definition}
\newtheorem{prop}[def ]{Proposition}
\newtheorem{theo}[def ]{Theorem}
\newtheorem{lem }[def ]{Lemma}
\newtheorem{coro}[def ]{Corollary}
\theoremstyle{definition}
\newtheorem*{rem}{Remark}
\title[Parametrization of elliptic curves by Drinfeld modular curves]{Explicit computation of the modular parametrization of elliptic curves over function fields by Drinfeld modular curves}
\date{}
\author[V. Petit]{Valentin Petit}
\address{Laboratoire de math\'ematiques de Besan\c con, Universit\'e Bourgogne Franche-Comt\'e, CNRS UMR 6623, 16, route de gray, 25000 Besan\c con, France}
\email{valentin.petit@univ-fcomte.fr}
\begin{document}

\begin{abstract}
Let $q$ be a prime power and $E$ a non-isotrivial elliptic curve over $\Fq(T)$ given by a Weierstrass  model.
We survey the construction, with an explicit point of view, of the modular parametrization of $E$ by the associated Drinfeld modular curve. 
We then prove a formula that allows us to evaluate this modular parametrization at cusps and we produce an explicit method to compute these values.
Finally we illustrate our results with several examples in characteristic 2 and 3.  
\end{abstract}

\subjclass[2020]{11G18, Secondary 11G09, 11F12, 14H52.} 
\keywords{Elliptic curves, Function Fields, Modular parametrization, Harmonic cochains, Theta functions, Drinfeld modular curves.}

\maketitle

\section{Introduction and motivation} 
Let $q$ be a prime power and $E$ be an elliptic curve defined over $\F_q(T)$ given by a Weierstrass model
$$
E \colon y^2+a_1xy+a_3y = x^3+a_2x^2 + a_4x +a_6, \quad a_i \in \F_q[T].
$$
We assume that $E$ is non-isotrivial and has split multiplicative reduction at a place, say $\infty:=1/T$. By the work of Drinfeld, Grothendieck, Jacquet and Langlands (\citep{drinfel1974elliptic,jacquetLang}), there exists a "modular
parametrization" $\Phi \colon \overline{M}_{\Gamma_0(n)} \rightarrow E$, where $\overline{M}_{\Gamma_0(n)}$ is the 
Drinfeld modular curve over $\Fq(T)$ associated to the Hecke congruence subgroup
$$
\Gamma_0(n) = \left\{ \begin{pmatrix} a & b \\ c & d \end{pmatrix} \in \Gl(\F_q[T]) \colon c\equiv 0 \bmod n \right\},
$$
and $n$ is the finite part of the conductor of $E$. The map $\Phi$ has been studied and described by several authors, especially Gekeler-Reversat \citep{gekelerrevjac96} and Gekeler \citep{gekeler1995analytical,MR1630606}. The main goal 
of this article is to explain how this modular parametrization can be explicitly computed and to deduce  some 
arithmetical properties from this. In particular, we describe a method to compute \emph{exactly} the images $ \Phi(c) \in E$ of 
the cusps $c \in \overline{M}_{\Gamma_0(n)}$.

The situation above has to be compared with the classical one in characteristic zero. In that case, if $F$ is an elliptic 
curve defined over $\Q$ with conductor $N$, by a series of works culminating with \citep{taylor1995ring,breuil2001modularity}, 
there exists a modular parametrization given by the composition
$\varphi \colon X_0(N) \overset{\phi}{\rightarrow} \mathbb{C}/\Lambda \overset{\wp}{\rightarrow} F(\mathbb{C})$, where $\Lambda$ is 
the period lattice of $F$, the map $\phi$ is roughly 
speaking given by the differential form of the weight-2 modular form $f(\tau) = \sum_{n\geq1} a_n e^{2in\pi \tau}$ associated 
to $F$, and the isomorphism $\wp$ is given by the Weierstrass function and its derivative. In the literature there are many investigations on the explicit description of the map $\phi$ and the computation of its arithmetical 
invariants: its degree (\citep{zagier1985modular, cremona1995computing,delaunay2003computing, watkins2002computing}), its critical points (
\citep{chen2016computing,delaunay2005critical}),
its evaluation at cusps (\citep{brunault2016ramification,wuthrichmodsymbol}), its explicit evaluation and applications to Heegner points  (\citep{MR3098131}).

In the case of Drinfeld modular curves, the fact that $E$ has split multiplicative reduction at $\infty$ implies 
 the existence of a Tate parameter $t \in \C^*$ and an isomorphism $E(\C) \simeq \C^*/t^\Z$ where $\C$ 
 is a completion of an algebraic closure of $\Fq((1/T))$. Hence on the one hand, the modular parametrization is given as in the classical case by a map 
 $$
\Phi \colon \overline{M}_{\Gamma_0(n)} \rightarrow \C^*/t^\Z
 $$
 where $\Phi$ will be defined in Section \ref{Modpar}. On the other hand, unlike the characteristic zero situation, the degree of $\Phi$ is easily computable 
 using a formula of Gekeler \citep{gekeler1995analytical} recalled in Theorem~\ref{deg modular}. The definition of $\Phi$ also requires a lot of theoretical material that we will recall for the self-containedness of this text.

The organization of the paper is as follows. In the notations and in Sections~\ref{BTt} to~\ref{Thetafct}, we give a survey of classical theoretical results on Drinfeld modular curves and related objects which are used for the construction of $\Phi$: the Bruhat-Tits tree, harmonic cochains, theta functions. Section~\ref{proofTH} is devoted to Theorem~\ref{Prodcusp}, which states that the expression of the map $\Phi$ using a theta function, which originally holds outside of the cusps, may also be evaluated at the cusps. This is our first key step for evaluating explicitly $\Phi$ at a cusp. As far as we know, there is no similar result in the characteristic zero setting (however numerical experimentations suggest that it should be the case). In Section~\ref{HeckS}, we recall the action of Hecke operators on different objects involved in our study. In Section~\ref{Modpar}, we use Hecke operators to give an explicit upper bound for the order of the image of a cusp by $\Phi$ (Theorem~\ref{ptetors}): this is reminiscent of the Manin-Drinfeld theorem, whose analogue in our context was proved by Gekeler \citep{MR1487469}, but our bound is explicit and it is the second key step in our method. Finally in Section~\ref{Appl} we illustrate our explicit description of the modular parametrization by two examples in characteristic 2 and 3 with small conductors (note that explicit examples in characteristic $>2$ seem to be rare in the literature). 

\subsection*{Acknowledgements}
The author is a member of the Laboratoire de math\'ematiques de Besan\c con (LmB) which receives support from the EIPHI Graduate School (contract ANR-17-EURE-0002).

\subsection*{Notations}
Let $p$ be a prime number, $q$ a power of $p$, and $\Fq$ a finite field with $q$ elements. We denote by $A=\Fq[T]$ the ring of polynomials with coefficients in $\Fq$ and $K=\Fq(T)$ 
the fraction field of $A$. Let $\K=\Fq((\frac{1}{T}))$ be the completion of $K$ with respect to the norm $|P|=q^{\deg(P)}$. The element $\pi=\frac{1}{T}\in K$ is a uniformizer of $\K$ and let 
$\O=\Fq[[\pi]]$ be the ring of integers of $\K$. We denote by $\C$ a completion of $\overline{\K}$ where $\overline{\K}$ is an algebraic closure of $\K$. Finally let $\Omega=\C-\K$ be the Drinfeld upper 
half-plane. The imaginary part map on $\Omega$, denoted $|\cdot|_i$, is defined by
$$
|z|_i=\inf \{|z-x|, \ x \in \K\}.
$$ 
Recall that the group $\Gl(\K)$ acts on $\Omega$ by fractional linear transformations: $\begin{pmatrix}
a&b\\c&d
\end{pmatrix}z=\dfrac{az+b}{cz+d}$. The imaginary part satisfies the relation 
$$
|\gamma z|_i=\dfrac{|z|_i}{|cz+d|^2} \qquad  \left(z \in  \Omega, \ \gamma=\begin{pmatrix}
a&b\\c&d
\end{pmatrix} \in \Gl(\K) \right).
$$
Any arithmetic subgroup $\Gamma \subset \Gl(A)$ acts on $\Omega$ with finite stabilizers: it implies that the quotient $\Gamma \backslash \Omega$ has a structure of an analytic space over $\K$. There exists a smooth irreducible affine algebraic curve $M_\Gamma$ defined over $\K$ whose underlying analytic space is canonically isomorphic to $\Gamma \backslash \Omega$. Let $\overline{M}_\Gamma$ be the smooth projective model of $M_\Gamma$. The algebraic curve $\overline{M}_\Gamma$ is called the Drinfeld modular curve associated to $\Gamma$.
\section{The Bruhat-Tits tree}\label{BTt}
For Sections \ref{BTt}, \ref{HarmC}, and \ref{Thetafct}, we mainly use the references \citep{gekelerrevjac96}, \citep{gekeler1995analytical}, and \citep{gekeler1995fundamental}.
We denote by $\mathcal{Z}$ the center of $\Gl(\K)$ and $\Ii$ the Iwahori subgroup defined by 
$$
\Ii=\left\{\begin{pmatrix}
a&b \\c&d
\end{pmatrix} \in \Gl(\O), \ c \equiv 0\bmod \pi\right\}.
$$
We recall a description of the Bruhat-Tits tree in terms of the group $\Gl(\K)$ as in \citep[II, 1.3]{serretrees}, \citep{gekeler1995fundamental}.
\begin{def }
The Bruhat-Tits tree $\T$ is the combinatorial graph with set of vertices $X(\T)=\Gl(\K)/\mathcal{Z}\Gl(\O)$ and set of oriented edges 
$Y(\T)=\Gl(\K)/\mathcal{Z}\Ii$. We associate to $\T$ the following maps
$$
\begin{array}{lccc}
o \colon & Y(\T) &\longrightarrow & X(\T) \\
& M \bmod \mathcal{Z}\Ii &\longmapsto & M \bmod \mathcal{Z} \Gl(\O)
\end{array}
$$
and 
$$
\begin{array}{lccc}
\overline{\cdot} \colon & Y(\T) &\longrightarrow & Y(\T) \\
& e &\longmapsto & \overline{e} = e \begin{pmatrix}
0&1\\\pi&0
\end{pmatrix}.
\end{array}
$$
The map $o$ is the canonical map that associates to an edge its origin. The map $e \mapsto \overline{e}$ associates to an edge $e$ its opposite edge denoted by~$\overline{e}$.
\end{def }
The graph $\T$ is a tree in the sense of Serre (\citep[I.2]{serretrees}) and it is a $(q+1)$-regular graph. The group $\Gl(\K)$ acts on 
the left on $\T$ in a natural way. Any arithmetic subgroup $\Gamma \subset \Gl(A)$ acts on $\T$ without inversion. Hence we can consider the quotient graph $\Gamma \backslash \T$ 
whose set of vertices is $X(\Gamma \backslash \T)=\Gamma \backslash X(\T)$ 
and set of edges is $Y(\Gamma \backslash \T)=\Gamma \backslash Y(\T)$. By the work of Serre (\citep[II.1.2 , II.1.3]{serretrees}), the graph $\Gamma \backslash \T$ is the disjoint 
union of a finite graph denoted by $(\Gamma \backslash \T)^0$ and a 
finite number of half-lines called the ends of $\Gamma \backslash \T$.
In fact we have the following statement (see \citep{MR1630606}).
\begin{prop}
There are canonical bijections between the following sets:
\begin{enumerate}
\item the ends of $\Gamma \backslash \T$,
\item the cusps of $\overline{M}_\Gamma$,
\item the orbits $\Gamma \backslash \Pj^1(K)$ of $\Pj^1(K)$ under $\Gamma$.
\end{enumerate}
\end{prop}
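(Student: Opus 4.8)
The plan is to establish the three bijections by constructing explicit maps and invoking the dictionary between the analytic and combinatorial descriptions of $\overline{M}_\Gamma$. First I would set up the bijection between (2) and (3), the cusps of $\overline{M}_\Gamma$ and the orbits $\Gamma \backslash \Pj^1(K)$. Recall that $\Gl(K)$ acts on $\Pj^1(\K) = \K \cup \{\infty\}$ by fractional linear transformations, and that the cusps of $\overline{M}_\Gamma = \overline{\Gamma \backslash \Omega}$ are precisely the boundary points one adjoins when passing from the affine curve $M_\Gamma$ to its smooth projective model. These boundary points correspond to the $\Gl(\K)$-orbit of $\infty \in \Pj^1(\K)$, and a standard argument shows that $\Gl(K)$ acts transitively on $\Pj^1(K)$; hence the set of cusps is identified with $\Gamma \backslash \Pj^1(K)$ via $\gamma \infty \mapsto \Gamma\gamma\infty$. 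One must check this is well-defined and injective, which amounts to verifying that two elements of $\Pj^1(K)$ lie in the same $\Gamma$-orbit if and only if the corresponding cusps coincide on $\overline{M}_\Gamma$; this is exactly the content of the cusp identification for Drinfeld modular curves as in \citep{MR1630606}.

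Next I would relate (1) and (3), the ends of $\Gamma \backslash \T$ and the orbits $\Gamma \backslash \Pj^1(K)$. The key combinatorial fact, already quoted from \citep[II.1.2, II.1.3]{serretrees} in the excerpt, is that $\Gamma \backslash \T$ decomposes as a finite graph $(\Gamma \backslash \T)^0$ together with finitely many half-line ends. Each end of $\Gamma \backslash \T$ lifts to a half-line (a ray) in $\T$, and an infinite ray in the Bruhat--Tits tree $\T$ determines a unique boundary point of $\T$, which under the identification of the tree's boundary with $\Pj^1(\K)$ corresponds to a point of $\Pj^1(\K)$. The ends of $\Gamma \backslash \T$, as opposed to arbitrary rays, single out exactly those boundary points fixed (up to the $\Gamma$-action) by a parabolic-type stabilizer, and these are precisely the $K$-rational points $\Pj^1(K) \subset \Pj^1(\K)$. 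Thus one gets a map from ends of $\Gamma\backslash\T$ to $\Gamma \backslash \Pj^1(K)$; checking it is a bijection requires showing that distinct ends give distinct $\Gamma$-orbits (two rays eventually in the same $\Gamma$-orbit of edges define the same orbit in $\Pj^1(K)$, and conversely) together with surjectivity, which follows because every $K$-rational boundary point of $\T$ is the endpoint of a ray whose image in $\Gamma\backslash\T$ is eventually contained in an end.

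Finally, to tie (1) to (2) directly, I would use the compatibility between the reduction map $\Omega \to \T$ (the building map sending a point of the Drinfeld upper half-plane to a vertex or edge of the tree, as developed in \citep{gekeler1995fundamental}) and the passage to quotients: the reduction map induces $\Gamma \backslash \Omega \to \Gamma \backslash \T$, and the analytic structure of a punctured neighborhood of a cusp on $\overline{M}_\Gamma$ matches the combinatorial structure of the corresponding end of $\Gamma \backslash \T$. I expect the main obstacle to be the careful identification of the boundary of the tree $\T$ with $\Pj^1(\K)$ and the verification that the \emph{ends} of the \emph{quotient} graph correspond to $K$-rational (as opposed to arbitrary) points — this is where the arithmetic of $\Gamma \subset \Gl(A)$, rather than just $\Gl(\K)$, enters, and where one must be precise about why non-$K$-rational boundary points do not contribute ends to $\Gamma \backslash \T$. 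Once the boundary identifications are in place, the three bijections follow formally by composing the maps constructed above and checking commutativity of the resulting triangle.
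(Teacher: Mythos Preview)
The paper does not actually prove this proposition: it is stated as a known fact with a reference to \citep{MR1630606} (Gekeler), so there is no ``paper's own proof'' to compare against. Your proposal is therefore not competing with an argument in the text but rather filling in what the paper takes as background.

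That said, your outline is essentially the standard route to this result and is broadly correct. A few remarks on precision. In your bijection between (2) and (3), the sentence ``These boundary points correspond to the $\Gl(\K)$-orbit of $\infty \in \Pj^1(\K)$'' is misleading: $\Gl(\K)$ acts transitively on $\Pj^1(\K)$, so this orbit is everything. What you want is that the compactification $\overline{M}_\Gamma$ is obtained by adjoining the $\Gamma$-orbits on $\Pj^1(K)$ specifically; this is the definition, not a derived fact, in most treatments. For (1) versus (3), the crucial input is Serre's structure theorem for $\Gamma\backslash\T$ when $\Gamma$ is arithmetic: the half-line ends arise exactly from the cusps of a fundamental domain, and each end lifts to a ray in $\T$ converging to a point of $\Pj^1(K)\subset\Pj^1(\K)$ because the stabilizers along the end are the unipotent (Borel-type) subgroups fixing a $K$-rational point. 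Your identification of this as the delicate step is exactly right, and the reference in the paper to \citep[II.1.2, II.1.3]{serretrees} is where the work is done. Your use of the building map to tie (1) and (2) directly is correct but not strictly necessary once (1)--(3) and (2)--(3) are in hand.
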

Let $(v_k)_{k \in \N}$ be the vertex of $\T$ corresponding to the classes of the matrices
$\begin{pmatrix}
\pi^{-k}&0\\0&1
\end{pmatrix}_{k \in \N}$ and let $e_k$ be the edge from $v_k$ to $v_{k+1}$. The end $\infty=(v_0,v_1,v_2,...)$ of $\T$ defines an orientation 
on $\T$ i.e. a decomposition $Y(\T)=Y^+(\T)\sqcup Y^-(\T)$ with $\overline{Y^+(\T)}=Y^-(\T)$. An edge $e$ is called positive if it
points towards the end $\infty$. The following statement is well-known and can be found for instance in \citep[p.~371]{gekeler1995improper}.
\begin{lem }\label{repBTtree}
The set of matrices $\begin{pmatrix}
\pi^j&y \\0&1
\end{pmatrix}$, with $j \in \Z$ and $y \in K_\infty \bmod \pi^j \O$, is a system of representatives for $X(\T)$ and $Y^{+}(\T)$.
\end{lem }
\section{Harmonic cochains}\label{HarmC}
\begin{def }\label{def-cochain}
Let $B$ be an abelian group. A map $\varphi \colon Y(\T) \rightarrow B$ is said to be a ($B$-valued) harmonic cochain if it satisfies the two following conditions:
\begin{enumerate}
\item for all $e \in Y(\T)$, $\varphi(\overline{e})=-\varphi(e)$,
\item for all $v \in X(\T)$, $\Sum_{o(e)=v}\varphi(e)=0$.
\end{enumerate}
Furthermore if for all $\gamma \in \Gamma$ and $e \in Y(\T)$ we have $\varphi(\gamma e)=\varphi(e)$, we say that $\varphi$ is $\Gamma$-invariant. 
\end{def }
The additive group of $B$-valued harmonic cochains is denoted by $\Ht(\T,B)$ and its subgroup of $\Gamma$-invariant harmonic cochains by  $\Ht(\T,B)^\Gamma$.

Elements of $\Ht(\T,B)^\Gamma$ can be considered as maps defined on the edges of the quotient graph 
$\Gamma \backslash \T$. Let $v \in X(\T)$ be a vertex and $\widetilde{v}$ its equivalence class modulo~$\Gamma$. The stabilizer of $v$, which is denoted by $\Gamma_v$, acts on the set 
$$
\{e \in Y(\T), \ o(e)=v\}.
$$
For $e \in Y(\T)$, let $\Gamma_e$ be its stabilizer. The length of the orbit of $e$ is $m(e)=[\Gamma_v: \Gamma_e]$ and this number only depends on the image $\tilde{e}$ of $e$ in $\Gamma \backslash \T$. Viewing $\varphi \in \Ht(\T,B)^\Gamma$ as a map on 
$Y(\Gamma \backslash \T)$, the sum condition in Definition~\ref{def-cochain} translates into
$$
\Sum_{\underset{o(\tilde{e})=\tilde{v}}{\tilde{e} \in Y(\Gamma\backslash \T)}}m(\tilde{e})\varphi(\tilde{e})=0.
$$
\begin{def }
A harmonic cochain $\varphi \in \Ht(\T, B)^\Gamma$ is said to be cuspidal if it has a finite support modulo $\Gamma$. 
\end{def }
We denote by $\Ht_!(\T,B)^\Gamma$ the subgroup of cuspidal harmonic cochains.
On $\Ht_!(\T,\mathbb{C})^\Gamma$, the Petersson inner product is defined by
\begin{equation}\label{Peter}
\langle \varphi, \psi\rangle=\Sum_{\tilde{e}  \in Y(\Gamma \backslash \T)}\dfrac{q-1}{2}|\Gamma_e|^{-1}\varphi(\tilde{e})\overline{\psi(\tilde{e})} \qquad (\varphi, \psi \in \Hzo).
\end{equation}
%
Let $n\in A$. If $n' \in A$ is a polynomial dividing $n$ then for each monic 
divisor $a$ of $n/n'$, we have an embedding $i_{a,n'}\colon \Ht_!(\T,B)^{\Gamma_0(n')} \rightarrow \Ht_!(\T,B)^{\Gamma_0(n)}$ 
given by
$$
i_{a,n'}(\varphi)(e)=\varphi \left( \begin{pmatrix}
a&0\\0&1
\end{pmatrix}e \right) \qquad (\varphi \in \Ht_!(\T,B)^{\Gamma_0(n')}, \ e \in Y(\T)).
$$
\begin{def }\label{newfr}
Let $\Ht_!^{\mathrm{new}}(\T,\Q)^{\Gamma_0(n)} $ be the orthogonal complement in $\Ht_!(\T,\Q)$, with respect to the Petersson 
inner product, of the images of all the $i_{a,n'}$ where $n'$ runs through the proper divisors of $n$. Let 
$\Ht_!^{ \mathrm{new}}(\T,\Z)^{\Gamma_0(n)}=\Ht_!(\T,\Z)^{\Gamma_0(n)} \cap \Ht_!^{\mathrm{new}}(\T,\Q)^{\Gamma_0(n)} $. 
A harmonic cochain $\varphi \in \Ht_!(\T,\Z)^{\Gamma_0(n)} $ is called a newform if 
$\varphi \in \Ht_!^{\mathrm{new}}(\T,\Z)^{\Gamma_0(n)}$.
\end{def }
We now introduce a map $j$ which will be used to construct the modular parametrization. Let $v,w \in X(\T)$ be two vertices. We denote by $g(v,w)$ the unique geodesic going from $v$ to $w$. For $v \in X(\T)$,  
$e \in Y(\T)$ and $\gamma, \alpha \in \Gamma$, we let
$$
\iota (e, \alpha,\gamma,v)=\begin{cases}
1 &\mathrm{if} \ \gamma e \in g(v,\alpha v), \\
-1 &\mathrm{if} \ \gamma e \in g(\alpha v,v), \\
0 & \mathrm{otherwise}.
\end{cases}
$$
Since the map $\gamma \mapsto \iota(e,\alpha, \gamma,v)$ has finite support, the quantity
$$
\varphi_{\alpha,v}(e)=\dfrac{1}{|\Gamma \cap \mathcal{Z}|}\Sum_{\gamma \in \Gamma} \iota(e,\alpha,\gamma,v)
$$
is well-defined and it is also $\Z$-valued. We let 
$\overline{\Gamma}=\Gamma^{\mathrm{ab}}/\Tor(\Gamma^{\mathrm{ab}})$, where $\Gamma^{\mathrm{ab}}$ is the abelianization 
of $\Gamma$ and $\Tor(\Gamma^{\mathrm{ab}})$ is the torsion subgroup of $\Gamma^{\mathrm{ab}}$.
%
\begin{lem }[{\citep[Lemma~3.3.3]{gekelerrevjac96}}]
The functions $\varphi_{\alpha,v}: Y(\T) \rightarrow \Z$ have the following properties:
\begin{enumerate}
\item For all $v \in X(\T)$ and all $\alpha \in \Gamma$, $\varphi_{\alpha,v} \in \Hzo$.
\item The function $\varphi_\alpha=\varphi_{\alpha,v}$ is independent of the choice of $v \in X(\T)$. 
\item The map $\alpha \mapsto \varphi_\alpha$ induces a group homomorphism $j$ from $\overline{\Gamma} $ to $\Hzo$.
\item The map $j\colon \overline{\Gamma} \rightarrow \Hzo$ is one-to-one and with finite cokernel.
\end{enumerate}
\end{lem }
Moreover as proved by Gekeler-Nonnengardt in \citep[Theorem 3.3]{gekeler1995fundamental}, we have in the case of the subgroup $\Gamma_0(n)$:
\begin{theo}\label{isomj}
Let $n \in A$ be a non constant polynomial. If $\Gamma=\Gamma_0(n)$ then the map $j\colon \overline{\Gamma} \rightarrow \Hzo$ is an isomorphism.
\end{theo}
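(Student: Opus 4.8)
The plan is to combine the general properties of $j$ from the preceding lemma with the Bass-Serre description of $\Gamma = \Gamma_0(n)$ acting on $\T$ and with the explicit fundamental domain of $\Gamma_0(n)$ on $\T$ (cf.\ \citep{gekeler1995fundamental}). Since $\overline{\Gamma}$ is torsion-free by construction and $\Hzo$ is torsion-free as a subgroup of a group of $\Z$-valued functions, and since $j$ is injective with finite cokernel, both are free $\Z$-modules of the same rank $g$, equal to the first Betti number of the finite core $(\Gamma\backslash\T)^0$ (equivalently, to the genus of $\overline{M}_\Gamma$). It therefore suffices to show that the finite group $\mathrm{coker}(j)$ is trivial, i.e.\ that $j$ is surjective.

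First I would make $\overline{\Gamma}$ explicit. The center $\mathcal{Z}$ acts trivially on $\T$ and $\Gamma\cap\mathcal{Z}=\Fq^\ast$ is finite, so the $\Gamma$-action on $\T$ factors through $\Gamma/(\Gamma\cap\mathcal{Z})$, which by Bass-Serre theory (\citep[I.5]{serretrees}) is the fundamental group of the graph of finite groups carried by $\Gamma\backslash\T$. Choosing a spanning tree $\Sigma$ of the core and the associated presentation, the generators are the finite vertex stabilizers together with one ``loop'' element $\gamma_{\tilde e}\in\Gamma$ for each of the $g$ edges $\tilde e$ of the core not lying in $\Sigma$. In the abelianization the vertex-stabilizer generators become torsion, and the ``loop'' relations do not involve the $\gamma_{\tilde e}$; hence the classes of the $\gamma_{\tilde e}$ form a $\Z$-basis of $\overline{\Gamma}$ (this step uses only finiteness of the stabilizers).

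Next I would compute $j(\gamma_{\tilde e})=\varphi_{\gamma_{\tilde e}}$ from the definition via the geodesic $g(v,\gamma_{\tilde e}v)$, choosing $v$ on the axis of $\gamma_{\tilde e}$, which is legitimate by part (2) of the lemma: the image of this geodesic in the core traverses exactly once the simple cycle obtained by closing up $\tilde e$ inside $\Sigma$. Consequently $\varphi_{\gamma_{\tilde e}}$ is supported on that cycle and, for an edge $\tilde e'$ of the core not in $\Sigma$, one gets $\varphi_{\gamma_{\tilde e}}(\tilde e')=\pm[\Gamma_{e}:\Gamma\cap\mathcal{Z}]$ (with $e\in Y(\T)$ a lift of $\tilde e$) if $\tilde e'=\tilde e$, and $0$ otherwise, because the simple cycle through $\tilde e$ contains no other non-tree edge. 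Thus, in the above basis of $\overline{\Gamma}$ and after restricting cochains to the non-tree edges of the core, $j$ is represented by the diagonal matrix $\mathrm{diag}\big(\pm[\Gamma_{e_i}:\Gamma\cap\mathcal{Z}]\big)$. Since a cuspidal harmonic cochain is determined by its values on the non-tree edges, a short lattice argument then gives surjectivity of $j$ as soon as this matrix is unimodular: $j(\overline{\Gamma})$ would then restrict onto the full lattice $\Z^g$ of such values, while $\Hzo$ restricts injectively into it and contains $j(\overline{\Gamma})$, forcing equality.

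The hard part is precisely this last point, and it is where the specific group $\Gamma_0(n)$ (and the hypothesis that $n$ is non-constant) enters — for an arbitrary arithmetic subgroup $j$ need not be surjective. What must be shown is that $\Sigma$ can be chosen so that every non-tree edge of the core has stabilizer reduced to the scalars $\Gamma\cap\mathcal{Z}=\Fq^\ast$; equivalently, that the edges of $(\Gamma\backslash\T)^0$ with non-scalar stabilizer form a forest and can all be included in a spanning tree. This is read off from the explicit fundamental domain of $\Gamma_0(n)$ on $\T$ of \citep{gekeler1995fundamental}: the ``elliptic'' part of the quotient graph is tree-like, so no cycle of the core passes through an edge with non-scalar stabilizer, and the weights $[\Gamma_{e_i}:\Gamma\cap\mathcal{Z}]$ appearing above are all equal to $1$.
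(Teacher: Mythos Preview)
The paper does not itself prove this statement: it is quoted verbatim as Theorem~3.3 of Gekeler--Nonnengardt \citep{gekeler1995fundamental}, with no argument supplied here. So there is no in-paper proof to compare against.

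Your sketch is a correct reconstruction of the strategy behind that reference. The Bass--Serre presentation of $\Gamma/(\Gamma\cap\mathcal Z)$, the identification of the classes of the loop generators $\gamma_{\tilde e}$ as a $\Z$-basis of $\overline{\Gamma}$, and your computation of $\varphi_{\gamma_{\tilde e}}$ on the non-tree edges (via the geodesic $g(v,\gamma_{\tilde e}v)$ projecting to the simple cycle through $\tilde e$) are all right, and they do reduce surjectivity of $j$ to the assertion that a spanning tree of $(\Gamma_0(n)\backslash\T)^0$ can be chosen to contain every edge with non-scalar stabilizer. One small remark: you need not insist that $v$ lie on the axis of $\gamma_{\tilde e}$; taking $v$ to be the lift in $\widetilde\Sigma$ of $t(\tilde e)$ already makes $g(v,\gamma_{\tilde e}v)$ project bijectively to the simple cycle, which is all the computation uses.

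The substantive step is your last one, and you correctly flag it as such. The claim that the edges of $(\Gamma_0(n)\backslash\T)^0$ with stabilizer strictly larger than $\Fq^\ast$ form a forest is exactly what the explicit description of the quotient graph in \citep{gekeler1995fundamental} yields; calling it something one ``reads off'' perhaps undersells it, since that description is the main technical work of the reference and is where the hypothesis $\Gamma=\Gamma_0(n)$ with $n$ non-constant is actually used. In short, your argument is sound and follows the same route as the source the paper cites, with the hard geometric input outsourced to the same place.
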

Let $g(\Gamma)$ be the genus of the modular curve $\overline{M}_\Gamma$. For an abelian group $B$, let  denote by 
$\mathrm{rk}_\Z(B)$ its rank. We have the following equalities (see \citep{gekeler1995analytical}):
$$
\mathrm{rk}_{\Z}(\overline{\Gamma})=\mathrm{rk}_{\Z}(\Hzo)=g(\Gamma).
$$
\section{Theta functions for \texorpdfstring{$\Gamma$}{Gamma}}\label{Thetafct}
\begin{def }[{\citep[Section 5]{gekelerrevjac96}}]
A holomorphic theta function (resp. meromorphic theta function) for $\Gamma$ is a holomorphic  function on $\Omega$ without zeros and poles on $\Omega$ 
and at the cusps (resp. without zeros and poles at cusps), and satisfying 
$$
f(\gamma z)=c_f(\gamma)f(z), \quad \mbox{ for all } z \in \Omega \mbox{ and } \gamma \in \Gamma,
$$
with $c_f(\gamma) \in \C^{*}$ independent of $z$. The map $c_f \colon \Gamma \rightarrow \C^{*}$ is called the multiplier of $f$.
\end{def }
Let $m$ a positive integer. We set 
$U_m=\{z \in \Omega, |z| \leq q^m,\ |z|_i \geq q^{-m} \}$. The set $\Omega$ is a rigid analytic subspace in $\Pj^1(\C)$ and $\Omega=\underset{m \geq 1}{\bigcup}U_m$ is an admissible covering.

For the rest of this section, we fix two elements $\omega, \eta \in \Omega$.
\begin{def }
 We put $\widetilde{\Gamma}=\Gamma/(\Gamma \cap \mathcal{Z})$ and
\begin{equation}\label{prod-theta}
\theta(\omega,\eta,z)=\Prod_{\gamma \in \widetilde{\Gamma}} \dfrac{z-\gamma \omega}{z-\gamma \eta}.
\end{equation}
\end{def }

Convergence and other properties of these theta functions are summarized in the following statement.
\begin{theo}[{\citep[Proposition 5.2.3, Theorem 5.4.1 and Proposition 5.4.12]{gekelerrevjac96}}]\label{prétheta}\
\begin{enumerate}
\item The product \eqref{prod-theta} converges locally uniformly on $\Omega$ and defines a meromorphic theta function for 
$\Gamma$. Moreover, it has only zeros and poles at $\Gamma \omega, \, \Gamma \eta$ if $\Gamma \omega \neq \Gamma \eta$.
\item The function $\theta(\omega,\eta,.)$ satisfies $\theta(\omega,\eta,\gamma z)=c(\omega,\eta,\gamma)\theta(\omega,\eta,z)$ for any $\gamma \in \Gamma$, $z\in \Omega$, with $c(\omega,\eta,\gamma)\in \C^*$. Moreover, $c(\omega,\eta,.)$ factors over $\overline{\Gamma}$.
\item Given $\alpha \in \Gamma$, the holomorphic function \begin{equation}\label{ualphaprod}u_\alpha(z)=\theta(\omega, \alpha \omega,z) = \Prod_{\gamma \in \widetilde{\Gamma}} \dfrac{z-\gamma \omega}{z-\gamma \alpha \omega}\end{equation} is well-defined 
and independent of $\omega \in \Omega$. It only depends on the class of $\alpha \in \overline{\Gamma}$. 
\item The multiplier $c(\omega, \eta,.)$ satisfies for all $\alpha \in \Gamma$, 
$c(\omega, \eta,\alpha)=\dfrac{u_\alpha(\eta)}{u_\alpha(\omega)}$ and in particular, is holomorphic in $\omega$ and $\eta$.
\item Let $c_\alpha(.)=c(\omega,\alpha\omega,.)$ be the multiplier of $u_\alpha$. The map $(\alpha, \beta) \mapsto c_\alpha(\beta)$ 
is a symmetric bilinear map on $\overline{\Gamma} \times\overline{\Gamma}$ which takes values in $\K^{*}$. 
\end{enumerate}
\end{theo}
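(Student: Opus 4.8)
The plan is to derive items (1)--(2) from direct non-archimedean estimates on the product and the interlinked items (3)--(5) from the properties of the functions $u_\alpha$ and their translation into harmonic cochains on $\T$. \emph{Item (1).} For $\gamma=\left(\begin{smallmatrix}a&b\\c&d\end{smallmatrix}\right)\in\Gamma$ write the general factor $\frac{z-\gamma\omega}{z-\gamma\eta}$ as $1-\frac{\gamma\omega-\gamma\eta}{z-\gamma\eta}$ and use the identity $\gamma\omega-\gamma\eta=\frac{\det\gamma\,(\omega-\eta)}{(c\omega+d)(c\eta+d)}$ together with $|\det\gamma|=1$ and $|c\omega+d|\geq|c|\,|\omega|_i$; since an arithmetic group acts on $\Omega$ with discrete orbits, only finitely many classes in $\widetilde{\Gamma}$ keep $|c|$ bounded or put $\gamma\omega$ or $\gamma\eta$ inside a fixed $U_m$, so $|\gamma\omega-\gamma\eta|\to 0$ while $|z-\gamma\eta|$ stays bounded below on $U_m$, and the general factor tends to $1$ uniformly on $U_m$. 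As a non-archimedean infinite product converges locally uniformly as soon as its factors tend to $1$ uniformly on affinoids, $\theta(\omega,\eta,\cdot)$ is a well-defined function, holomorphic away from its poles; each factor contributes a simple zero at $\gamma\omega$ and a simple pole at $\gamma\eta$ and the tail is a nowhere-vanishing unit, so the divisor on $\Omega$ is exactly $\Gamma\omega-\Gamma\eta$ when $\Gamma\omega\neq\Gamma\eta$. Letting $|z|_i\to\infty$ sends every factor to $1$, so $\theta\to1$ at the cusp $\infty$; the analogous analysis along the other ends of $\Gamma\backslash\T$ (grouping the $\gamma$ into stabilizer cosets) shows $\theta$ has no zeros or poles at any cusp, so it is a meromorphic theta function for $\Gamma$.

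\emph{Item (2).} For $\delta\in\Gamma$ with bottom row $(c,d)$, the substitution $\delta z-\gamma\omega=\delta z-\delta(\delta^{-1}\gamma\omega)=\frac{\det\delta\,(z-\delta^{-1}\gamma\omega)}{(cz+d)(c\,\delta^{-1}\gamma\omega+d)}$ and its analogue for $\eta$ give $\frac{\delta z-\gamma\omega}{\delta z-\gamma\eta}=\frac{z-\delta^{-1}\gamma\omega}{z-\delta^{-1}\gamma\eta}\cdot\frac{c\,\delta^{-1}\gamma\eta+d}{c\,\delta^{-1}\gamma\omega+d}$; reindexing $\gamma\mapsto\delta\gamma$ collapses the first factors to $\theta(\omega,\eta,z)$ and the second ones (convergent by the same estimate) to a $z$-independent element $c(\omega,\eta,\delta)\in\C^*$. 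Comparing $\theta(\omega,\eta,\delta_1\delta_2z)$ computed in two ways shows $c(\omega,\eta,\cdot)$ is a homomorphism $\Gamma\to\C^*$, hence factors through $\Gamma^{\mathrm{ab}}$; that it factors over $\overline{\Gamma}$ will follow from (3)--(4) below, for then $c(\omega,\eta,\alpha)=u_\alpha(\eta)/u_\alpha(\omega)$ depends on $\alpha$ only through its class in $\overline{\Gamma}$.

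\emph{Items (3)--(5).} Taking $\eta=\alpha\omega$ one has $\Gamma\alpha\omega=\Gamma\omega$, so the divisor of $u_\alpha^{(\omega)}:=\theta(\omega,\alpha\omega,\cdot)$ is trivial and $u_\alpha^{(\omega)}$ is a holomorphic theta function, nowhere zero on $\Omega$ and at the cusps. For its independence of $\omega$ and its dependence on $\alpha$ only modulo $\overline{\Gamma}$, I would invoke the dictionary (van der Put) attaching to a nowhere-vanishing holomorphic function on $\Omega$ a $\Z$-valued harmonic cochain on $\T$, two such functions with equal cochains differing only by a constant: the cochain of $u_\alpha^{(\omega)}$ is, up to normalization, the counting cochain $\varphi_\alpha=j(\alpha)$ of Section~\ref{HarmC}, visibly independent of $\omega$ and of anything beyond $\alpha\bmod\overline{\Gamma}$, and the residual constant equals $1$ because all these functions tend to $1$ at $\infty$. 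Granting this, item (4) follows from the Weil-reciprocity identity $\theta(\omega,\eta,z_1)/\theta(\omega,\eta,z_2)=\theta(z_1,z_2,\omega)/\theta(z_1,z_2,\eta)$ (obtained by truncating the products and applying reciprocity for rational functions on $\Pj^1(\C)$): specialising $z_1=\alpha z_2$, the left side is $c(\omega,\eta,\alpha)$ while, since $\theta(\alpha z_2,z_2,\cdot)=u_\alpha(\cdot)^{-1}$ by $\omega$-independence, the right side is $u_\alpha(\eta)/u_\alpha(\omega)$. For item (5): $\omega$-independence gives $u_{\alpha\beta}=u_\alpha u_\beta$ (write $u_\beta=\theta(\alpha\omega,\beta\alpha\omega,\cdot)$ and use the cocycle $\theta(a,b,\cdot)\theta(b,c,\cdot)=\theta(a,c,\cdot)$), hence $c_{\alpha\beta}=c_\alpha c_\beta$, i.e.\ biadditivity in the first variable; biadditivity in the second is the homomorphism property of the multiplier; the symmetry $c_\alpha(\beta)=c_\beta(\alpha)$ is then immediate, since item (4) gives $c_\alpha(\beta)=c(\omega,\alpha\omega,\beta)=u_\beta(\alpha\omega)/u_\beta(\omega)$ while the transformation law of $u_\beta$ gives $u_\beta(\alpha\omega)=c_\beta(\alpha)u_\beta(\omega)$; and $c_\alpha(\beta)\in\K^*$ because $\sigma(u_\beta(z))=u_\beta(\sigma z)$ for every $\sigma\in\Gal(\overline{\K}/\K)$ (the product defining $u_\beta$ has its matrix entries in $K$, fixed by $\sigma$, and $u_\beta$ is independent of the base point), so $\sigma$ fixes $u_\beta(\alpha\omega)/u_\beta(\omega)$ and this ratio lies in $\K^*$.

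\emph{Main obstacle.} The delicate steps are the ``reciprocity'' ones --- the $\omega$-independence in (3) and the identity in (4) --- since each requires reindexing or interchanging only conditionally convergent infinite products; carrying this out rigorously (truncation, Weil reciprocity on $\Pj^1(\C)$, and a uniform estimate of the truncation error in all the parameters) is where the real work lies. Everything else is a soft non-archimedean estimate or a formal consequence of these two identities.
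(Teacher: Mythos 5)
First, a point of reference: the paper does not prove Theorem~\ref{prétheta} at all --- it is quoted verbatim from Gekeler--Reversat (Proposition 5.2.3, Theorem 5.4.1, Proposition 5.4.12 of \citep{gekelerrevjac96}), so there is no proof in the paper to compare against. Your sketch does reconstruct the broad lines of the Gekeler--Reversat treatment (non-archimedean estimates for convergence, the multiplier as a homomorphism, the van der Put correspondence between invertible holomorphic functions and $\Z$-valued harmonic cochains, symmetry of $c_\alpha(\beta)$ via evaluating the transformation law at $\alpha\omega$), but as a proof it has genuine gaps, and two of them are exactly the points this paper is most careful about.

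The most serious one is your handling of the cusps in item (1), and the normalization step in item (3) that depends on it. You argue that ``letting $|z|_i\to\infty$ sends every factor to $1$, so $\theta\to 1$ at the cusp $\infty$,'' and that the ``analogous analysis along the other ends'' gives invertibility at all cusps. Termwise passage to the boundary is not justified by locally uniform convergence on $\Omega$: the convergence of \eqref{prod-theta} is uniform on the affinoids $U_m$, which stay away from $\Pj^1(K)$, and nothing in your estimate is uniform as $z$ approaches a cusp. That boundary behaviour is precisely the content of Theorem~\ref{thcusp} (cited from a separate paper of Gekeler) and of this paper's own Theorem~\ref{Prodcusp}, whose entire Section~\ref{proofTH} is devoted to showing that the product formula may legitimately be evaluated at a cusp --- via a carefully chosen sequence $z_n\to s$ and the splitting of $\widetilde{\Gamma}$ according to $|\gamma\omega|_i$ versus $|z_n-s|_i$. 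So the step you treat as immediate is a theorem, not an observation; the same applies to pinning the van der Put constant in item (3) to $1$ by ``both functions tend to $1$ at $\infty$.'' In addition, you yourself flag the reciprocity identity $\theta(\omega,\eta,z_1)/\theta(\omega,\eta,z_2)=\theta(z_1,z_2,\omega)/\theta(z_1,z_2,\eta)$ and the $\omega$-independence as ``where the real work lies'' and leave them unproved; since items (3)--(5) all rest on these reindexings of conditionally convergent products, that is the core of the argument, not a peripheral verification. Finally, the Galois argument for $c_\alpha(\beta)\in\K^*$ in item (5) is too quick in characteristic $p$: the fixed field of the continuous action of $\Gal(\overline{\K}/\K)$ on $\C$ is the completion of the perfect closure of $\K$, not $\K$ itself, so being Galois-fixed does not by itself place the value in $\K^*$; one needs an extra argument (e.g.\ exploiting the independence of $\omega$ to choose base points in several separable quadratic extensions, or the explicit formula \eqref{coeffmul} together with an integrality/valuation argument as in Gekeler--Reversat).
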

We recall that  the multiplier $c(\omega,\eta,.)$ satisfies 
(see \citep[5.4.9]{gekelerrevjac96})
\begin{equation}\label{coeffmul}
c(\omega,\eta, \alpha)=
\left\{
\begin{array}{ll}
1 &  \mathrm{if} \ \alpha \infty=\infty,  \\
\Prod_{\gamma \in \widetilde{\Gamma}} \dfrac{\alpha \infty-\gamma \omega}{\alpha \infty- \gamma \eta} &  \mathrm{otherwise.}
\end{array}
\right.
\end{equation}
Since for all $\alpha \in \Gamma$, the map $\gamma \in \Gamma \mapsto c_\alpha(\gamma) \in \C^{*}$ factors over 
$\overline{\Gamma}$, the map
$$
\begin{array}{lccc}
c \colon &\Gamma &\longrightarrow &\Hom(\overline{\Gamma}, \C^*) \\
&\alpha & \longmapsto & c_\alpha(.)
\end{array}
$$
is well-defined. Furthermore
the map $c$ factors over $\Gamma$ and gives a map $\overline{c}\colon \overline{\Gamma} \rightarrow \Hom(\overline{\Gamma}, \C^*)$.
\begin{theo}[{\citep[Proposition~2.6]{MR1487469}}]\label{thcusp}
Let $\omega, \eta$ be elements of $\Omega$. The function $\theta(\omega,\eta,.)$ has a meromorphic continuation to the boundary 
$\Pj^1(K)$ of $\Omega$. Moreover $\theta(\omega, \eta,.)$ is holomorphic without any zeros at cusps.
\end{theo}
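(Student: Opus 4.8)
The plan is to analyze the infinite product \eqref{prod-theta} one factor at a time near a boundary point $s \in \Pj^1(K)$, and show that the product converges to a nonzero holomorphic value there, after suitably grouping the factors. First I would reduce to the case $s = \infty$: since $\Gl(K)$ acts transitively on $\Pj^1(K)$ and permutes the cusps, and since applying $\gamma \in \Gl(K)$ changes $\theta(\omega,\eta,z)$ only by multiplying $\omega, \eta$ by $\gamma^{-1}$ and composing $z$ with $\gamma$, it suffices to show that $\theta(\omega,\eta,z)$ extends holomorphically and without zeros as $z \to \infty$ in $\Omega$. Concretely, one examines the behavior as $|z|_i \to \infty$ (i.e.\ $z$ approaches the cusp $\infty$ along the end of $\T$ fixed in Section~\ref{BTt}).

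The key analytic input is the product expansion at the cusp: for a single factor $\dfrac{z - \gamma\omega}{z - \gamma\eta}$ with $\gamma\infty \neq \infty$, the point $\gamma\omega$ lies in $\K$-bounded position, so as $|z| \to \infty$ the factor tends to $1$; one needs a quantitative estimate of the form $\left|\dfrac{z-\gamma\omega}{z-\gamma\eta} - 1\right| = \left|\dfrac{\gamma\eta - \gamma\omega}{z - \gamma\eta}\right|$, and the numerator $|\gamma\eta - \gamma\omega| = \dfrac{|\eta-\omega|\,|z|_i\cdots}{|c\omega+d|\,|c\eta+d|}$ (via the imaginary-part formula from the Notations) decays as $\gamma$ runs through $\widetilde\Gamma$ because $\sum_\gamma |c_\gamma \omega + d_\gamma|^{-1}|c_\gamma\eta + d_\gamma|^{-1}$ converges --- this is exactly the convergence already granted by Theorem~\ref{prétheta}(1) on $\Omega$, and the uniformity in $z$ for $|z|_i$ large is what upgrades it to convergence at the cusp. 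The factors with $\gamma\infty = \infty$, i.e.\ $\gamma$ upper triangular in $\Gamma$, form the subgroup $\Gamma_\infty$; these must be grouped together into a single sub-product (a finite product over $\widetilde\Gamma_\infty \backslash$-coset representatives, or handled via the exponential/$t$-expansion at $\infty$) which one checks is itself holomorphic and nonvanishing at $\infty$ --- this is essentially the statement that the "constant term'' of the $t$-expansion of $\theta$ is a unit, and it is compatible with the multiplier formula \eqref{coeffmul}, where the case $\alpha\infty = \infty$ gives multiplier $1$.

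The main obstacle, as I see it, is precisely the treatment of the stabilizer factors: the naive infinite product does not converge factor-by-factor over all of $\widetilde\Gamma$ uniformly up to the boundary, because the $\gamma \in \Gamma_\infty$ contribute factors that do not individually tend to $1$. One must either pass to the $t$-expansion (writing $z$ near $\infty$ in terms of the local parameter $t = t(z)$ and showing $\theta = t^0 \cdot(\text{unit power series in }t)$, using that $\theta$ has no zeros or poles on $\Omega$ near the cusp by Theorem~\ref{prétheta}(1) together with the hypothesis that $\Gamma\omega, \Gamma\eta$ do not accumulate at the cusp), or else reorganize the product as $\prod_{\gamma \in \widetilde\Gamma_\infty \backslash \widetilde\Gamma} \prod_{\delta \in \widetilde\Gamma_\infty} \dfrac{z - \gamma\delta\omega}{z - \gamma\delta\eta}$ and analyze the inner finite-type products. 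Once this grouping is justified and the tail estimate above is in place, the Weierstrass-type criterion for convergence of infinite products of rigid-analytic functions gives that the extended $\theta(\omega,\eta,\cdot)$ is holomorphic and nonvanishing at $\infty$, hence (by the transitivity reduction) at every cusp; meromorphic continuation to all of $\Pj^1(K)$ then follows since away from $\Gamma\omega \cup \Gamma\eta$ we get holomorphy without zeros, and the finitely many boundary orbits meeting $\Gamma\omega$ or $\Gamma\eta$ contribute only poles or zeros coming from the explicitly visible factors. I would also double-check consistency with Theorem~\ref{prétheta}(2): the multiplier $c(\omega,\eta,\cdot)$ factoring over $\overline\Gamma$ and being $\K^*$-valued is what guarantees the boundary values are well-defined independently of the path of approach within a $\Gamma$-orbit.
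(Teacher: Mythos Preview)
The paper does not prove Theorem~\ref{thcusp}: it is quoted as a known result from \citep[Proposition~2.6]{MR1487469} and serves only as input for the paper's own contribution, Theorem~\ref{Prodcusp}. There is therefore no proof in the present paper to compare your proposal against.

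That said, a few remarks on the outline itself. The overall strategy --- estimating the factors $(z-\gamma\omega)/(z-\gamma\eta)$ near a cusp, controlling the tail of the product uniformly, and handling the $\Gamma_\infty$-block via a local parameter (the $t$-expansion) at the cusp --- is the right shape and broadly matches the treatment in the references the paper cites (cf.\ \citep[Section~5.3]{gekelerrevjac96}). Two points in your write-up need correction, however. First, the orbits $\Gamma\omega$ and $\Gamma\eta$ \emph{do} accumulate at every cusp, so the parenthetical ``$\Gamma\omega, \Gamma\eta$ do not accumulate at the cusp'' is false; what one actually uses is a quantitative lower bound of the form $|s-\gamma\omega| \ge \kappa_{s,\omega}/|c\omega+d|$, exactly the estimate invoked later in Section~\ref{proofTH} via \citep[Lemma~5.3.9]{gekelerrevjac96}. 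Second, since $\omega,\eta \in \Omega$ and $\Omega \cap \Pj^1(K) = \emptyset$, the orbits $\Gamma\omega,\Gamma\eta$ never meet the boundary at all, so your final sentence about ``boundary orbits meeting $\Gamma\omega$ or $\Gamma\eta$'' contributing poles or zeros is vacuous: once uniform convergence of the product in a neighborhood of the cusp is established, holomorphy and nonvanishing there follow directly from the standard criterion for infinite products of nonvanishing functions. Also note that your reduction ``by $\Gl(K)$-transitivity to $s=\infty$'' conjugates $\Gamma$ to a different arithmetic subgroup, so the argument must be carried out uniformly for all such subgroups, not just the fixed $\Gamma$.
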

\begin{rem}
The value of the meromorphic continuation of $\theta(\omega,\eta,.)$ at the cusp 
$\infty$ is 1 (this can be derived from \citep[Lemma 5.3.10]{gekelerrevjac96}).
\end{rem}
\section{Formula for the value of theta functions at a cusp}\label{proofTH}
A consequence of Theorem \ref{thcusp} is that the holomorphic function $u_\alpha$ has a holomorphic continuation to $\overline{\Omega} = \Omega\, \sqcup\,  \Pj^1(K)$ hence it is a holomorphic theta function. Furthermore, as proved in \citep[Lemma 5.3.9]{gekelerrevjac96}, for all $s \in K$ and all 
$\omega,\eta \in \Omega$, the product $\Prod_{\gamma \in \widetilde{\Gamma}}\dfrac{s-\gamma \omega}{s-\gamma \eta}$ converges.
Note that it does not necessarily imply that the image of $u_\alpha$ at a cusp $s \in K$ is given by the product formula~\eqref{ualphaprod}. 
However we  prove now that it is indeed the case.
\begin{theo}\label{Prodcusp}
Let $\alpha$ be an element of $\overline{\Gamma}$ and $s \in \Pj^1(K)-\{\infty\}$. The value of $u_\alpha$ at $s$ is given by
\begin{equation*}
u_\alpha(s)= \Prod_{\gamma \in \widetilde{\Gamma}} \dfrac{s-\gamma \omega}{s-\gamma \alpha \omega}.
\end{equation*}
\end{theo}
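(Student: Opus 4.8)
The plan is to compare the partial products of $u_\alpha$ evaluated at an interior point $z \in \Omega$ with the partial products evaluated at the cusp $s$, and to pass to the limit using the known analytic behaviour of $u_\alpha$ near $s$. Concretely, I would fix an exhaustion of $\widetilde{\Gamma}$ by an increasing sequence of finite subsets $S_1 \subset S_2 \subset \cdots$ with $\bigcup_k S_k = \widetilde{\Gamma}$, and set $u_\alpha^{(k)}(z) = \prod_{\gamma \in S_k} \tfrac{z - \gamma\omega}{z - \gamma\alpha\omega}$. Each $u_\alpha^{(k)}$ is a rational function on $\Pj^1(\C)$, regular and nonzero in a neighbourhood of $s$ once $k$ is large enough that $S_k$ contains all $\gamma$ with $\gamma\omega$ or $\gamma\alpha\omega$ "close to" $s$ — and by the convergence result of \citep[Lemma 5.3.9]{gekelerrevjac96} cited just before the theorem, $\prod_{\gamma \in \widetilde{\Gamma}} \tfrac{s-\gamma\omega}{s-\gamma\alpha\omega}$ converges, so $u_\alpha^{(k)}(s) \to \prod_{\gamma} \tfrac{s-\gamma\omega}{s-\gamma\alpha\omega}$.

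Next I would use Theorem~\ref{thcusp} and the discussion opening this section: $u_\alpha$ extends to a holomorphic, nonvanishing function on $\overline{\Omega}$, so in particular it is rigid-analytic (given by a convergent Laurent/power series in a local parameter) in a suitable neighbourhood $V$ of $s$ inside $\overline{\Omega}$, and $u_\alpha^{(k)} \to u_\alpha$ locally uniformly on $\Omega$. The key step is to upgrade this interior convergence to convergence at the boundary point $s$ itself. For this I would show that the tails $\prod_{\gamma \notin S_k} \tfrac{z-\gamma\omega}{z-\gamma\alpha\omega}$ converge to $1$ uniformly for $z$ in a full neighbourhood $V$ of $s$ in $\overline{\Omega}$, not merely on compact subsets of $\Omega$. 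The estimates controlling these tails come from the same source as the convergence of \eqref{ualphaprod} and of the boundary product in \citep[Lemma 5.3.9]{gekelerrevjac96}: for $\gamma$ outside a large finite set, $\gamma\omega$ and $\gamma\alpha\omega$ are both far (in the $|\cdot|$-metric, or after moving $s$ to $\infty$, both have large imaginary part) from all of $V$, which forces $|{\tfrac{z-\gamma\omega}{z-\gamma\alpha\omega}} - 1|$ to be small uniformly in $z \in V$. Granting this uniform tail estimate, $u_\alpha^{(k)} \to u_\alpha$ uniformly on $V$; evaluating at $z = s$ and combining with the convergence $u_\alpha^{(k)}(s) \to \prod_\gamma \tfrac{s-\gamma\omega}{s-\gamma\alpha\omega}$ established above yields $u_\alpha(s) = \prod_{\gamma \in \widetilde{\Gamma}} \tfrac{s - \gamma\omega}{s - \gamma\alpha\omega}$, as claimed. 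One harmless reduction along the way: since everything is $\Gamma$-equivariant and $u_\alpha$ is independent of $\omega$, I may assume $s \neq \alpha^{-1}\infty$-type degeneracies are avoided, or simply note the identity is trivial/continuous away from the finitely many $\gamma$ that matter.

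The main obstacle is precisely this passage to a \emph{neighbourhood} of the cusp rather than a compact-interior statement: the product \eqref{ualphaprod} a priori only converges locally uniformly on $\Omega$, and $s$ is a boundary point, so one cannot simply quote "uniform limit of analytic functions." The resolution is to extract from the proofs of \citep[Proposition 5.2.3, Lemma 5.3.9]{gekelerrevjac96} an explicit uniform bound on $|\gamma\omega - s|$ and $|\gamma\alpha\omega - s|$ from below (equivalently on $|\gamma\omega|_i$ from below) valid for all but finitely many $\gamma$, together with the summability $\sum_\gamma \tfrac{1}{|\gamma\omega|_i}$-type estimate that already underlies convergence; this makes the tail products geometrically close to $1$ on all of $V$. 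I expect this to be the only genuinely non-formal point; once the uniform tail estimate is in hand, the limiting argument is routine.
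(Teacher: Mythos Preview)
Your overall strategy---approach $s$ from inside $\Omega$ and compare with the product at $s$---is the right shape, but the mechanism you propose for controlling the tails has a genuine gap, and the paper's execution differs in an essential way. The paper does \emph{not} attempt uniform convergence of partial products on a neighbourhood of $s$ in $\overline{\Omega}$. Instead it fixes $\omega$ with $|\omega| = |\omega|_i \in q^{\Q - \Z}$, chooses a specific sequence $z_n = s + T^{-n}\zeta$ (with $|\zeta| = |\zeta|_i = 1$, so $|z_n - s|_i = q^{-n}$), and compares the two \emph{full} products termwise via
\[
F_{\gamma,s}(z) \;=\; \frac{(s-\gamma\omega)(z-\gamma\alpha\omega)}{(s-\gamma\alpha\omega)(z-\gamma\omega)}
\;=\; 1 + \frac{(z-s)(\alpha\omega-\omega)\det\gamma}{(s-\gamma\alpha\omega)(z-\gamma\omega)(c\omega+d)(c\alpha\omega+d)}.
\]
The heart of the argument is a dichotomy on $\gamma$ according to whether $|\gamma\omega|_i < q^{-n}$ or $|\gamma\omega|_i > q^{-n}$ (the choice of $|\omega|$ excludes equality). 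In the first regime one uses $|z_n - \gamma\omega| \geq |z_n - s|_i$ together with $|c\omega+d|$ being large; in the second one uses $|z_n - \gamma\omega| = |s - \gamma\omega| \geq \kappa_{s,\omega}/|c\omega+d|$ from \citep[Lemma~5.3.9]{gekelerrevjac96}. Either way $|F_{\gamma,s}(z_n) - 1| \to 0$ uniformly in $\gamma$ as $n \to \infty$, so $\prod_\gamma F_{\gamma,s}(z_n) \to 1$ and hence $u_\alpha(z_n) \to \prod_\gamma \tfrac{s-\gamma\omega}{s-\gamma\alpha\omega}$; combined with $u_\alpha(z_n) \to u_\alpha(s)$ from Theorem~\ref{thcusp}, the theorem follows.

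The gap in your proposal is the claim that for $\gamma$ outside a large finite set, $\gamma\omega$ and $\gamma\alpha\omega$ stay far from $V$, equivalently that $|\gamma\omega - s|$ or $|\gamma\omega|_i$ is bounded below off a finite set. Both are false: the $\Gamma$-orbit of $\omega$ accumulates at every cusp, so infinitely many $\gamma\omega$ lie arbitrarily close to $s$, and $|\gamma\omega|_i = |\omega|_i / |c\omega+d|^2 \to 0$. The bound from \citep[Lemma~5.3.9]{gekelerrevjac96} is $|s - \gamma\omega| \geq \kappa/|c\omega+d|$, which itself tends to $0$ and so cannot yield a tail estimate uniform over $z \in V$. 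Concretely, for any fixed exhaustion $S_k$ and any neighbourhood $V$ of $s$ meeting $\Omega$, there are $\gamma \notin S_k$ with $\gamma\alpha\omega \in V$, and near such points the individual tail factor $\tfrac{z-\gamma\omega}{z-\gamma\alpha\omega}$ is unbounded. The paper's dichotomy is precisely what handles the infinitely many $\gamma$ whose orbit points approach $s$, and it does so through the imaginary part of the specific $z_n$, not through a uniform bound on a neighbourhood; without a replacement for this device your argument does not close.
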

The rest of this section is devoted to the proof of Theorem~\ref{Prodcusp}.
Let $s \in \Pj^1(K)-\{\infty\}$. Since the product  $u_\alpha(z)$ is independent of the choice of $\omega \in \Omega$, we 
can choose $\omega \in \Omega$ such that $|\omega| \in q^{\Q-\Z}$. This implies that $|\omega|=|\omega|_i$.
We have
\begin{equation}\label{pr}
\left|\Prod_{\gamma \in \widetilde{\Gamma}}\dfrac{s-\gamma \omega}{s-\gamma \alpha \omega}-\Prod_{\gamma \in  \widetilde{\Gamma}}\dfrac{z-\gamma \omega}{z-\gamma \alpha \omega} \right|=\left| u_\alpha(z)\right|
\left| \Prod_{\gamma \in \widetilde{\Gamma}}\dfrac{(s-\gamma \omega)(z-\gamma \alpha \omega)}{(s-\gamma \alpha \omega)(z-\gamma \omega)}-1 \right|.
\end{equation} 
We let $F_{\gamma,s}(z)=\dfrac{(s-\gamma \omega)(z-\gamma \alpha \omega)}{(s-\gamma \alpha \omega)(z-\gamma \omega)}$.
\begin{lem }\label{lemcv1}
For $\gamma =\begin{pmatrix}
a&b\\c&d
\end{pmatrix} \in \widetilde{\Gamma}$, we have 
$$
F_{\gamma,s}(z)=1+\dfrac{(z-s)(\alpha \omega-\omega)\det(\gamma)}{(s-\gamma \alpha \omega)(z-\gamma \omega)(c\omega+d)(c \alpha \omega+d)}.
$$
\end{lem }
\begin{proof}
Let $\gamma=\begin{pmatrix} a&b\\c&d \end{pmatrix} \in \widetilde{\Gamma}$ and $z \in \Omega$. We first note that
\begin{equation}\label{gammaomg}
\gamma \alpha \omega-\gamma \omega= \dfrac{(\alpha \omega-\omega)\det (\gamma)}{(c\omega+d)(c \alpha \omega+d)}.
\end{equation}
Furthermore we have
\begin{align*}
\dfrac{(s-\gamma \omega)(z-\gamma \alpha \omega)}{(s-\gamma \alpha \omega)(z-\gamma \omega)} &= \dfrac{sz-s(\gamma \alpha \omega)-z(\gamma \omega)+(\gamma \omega)(\gamma \alpha \omega)}{(s-\gamma\alpha \omega)(z-\gamma \omega)} \\
&=1+\dfrac{(z-s)(\gamma \alpha \omega-\gamma \omega)}{(s-\gamma \alpha \omega)(z-\gamma \omega)} \\
&=1+\dfrac{(z-s)(\alpha \omega-\omega)\det(\gamma)}{(s-\gamma \alpha \omega)(z-\gamma \omega)(c \omega+d)(c\alpha \omega+d)}
\end{align*}
and the lemma follows.
\end{proof}
We know that $|s-\gamma \alpha \omega|\geq \dfrac{\kappa_{s,\alpha \omega}}{|c\alpha \omega+d|}$ 
(see \citep[proof of Lemma 5.3.9]{gekelerrevjac96}) where $\kappa_{s, \alpha \omega}$ is a positive real number only depending 
on $s$, $\alpha$ and $\omega$. It yields 
\begin{equation}\label{majorationFg}
|F_{\gamma,s}(z)-1|\leq \dfrac{|z-s||\alpha \omega-\omega| \kappa_{s, \alpha \omega}^{-1}}{|z-\gamma  \omega||c \omega+d|}\leq \dfrac{|z-s| C_{\omega,\alpha,s}}{|z-\gamma  \omega||c \omega+d|}
\end{equation}
where $C_{\omega, \alpha,s}$ is independent of $\gamma \in \Gamma$ and $z \in \Omega$.\medskip
\\
The goal is to prove that the limit of \eqref{pr} is $0$ as $z$ tends to $s$. From the meromorphic continuation of  $u_\alpha$ at $s$,
we deduce that $u_\alpha(z) \underset{z \rightarrow s}{\rightarrow} u_\alpha(s)$. It is then sufficient to find a sequence 
$(z_n)_{n \in \N} \subset \Omega$ such that $\underset{n \rightarrow \infty}{\lim}z_n=s$ and such that
\begin{equation}\label{prod s}
\underset{n \rightarrow \infty}{\lim} u_\alpha(z_n)=\Prod_{\gamma \in \widetilde{\Gamma}}\dfrac{s-\gamma \omega}{s-\gamma \alpha \omega}.
\end{equation}
We have 
\begin{equation}\label{ConstC}
|u_\alpha (z_n)| \leq \max(|u_\alpha(s)|,|u_\alpha(s)-u_\alpha(z_n)|) < C
\end{equation}
where $C>0$ only depends on $s$ and $\alpha$ since   $u_\alpha(z_n) \rightarrow u_\alpha(s)$. Now to prove \eqref{prod s}, 
we need to prove that for all $\varepsilon>0$, there exists $n_\varepsilon>0$ such that for all $n\geq  n_\varepsilon$, 
$|F_{\gamma ,s }(z_n)-1|< \varepsilon$.
 We choose $\zeta \in \Omega-\Gamma  \omega$ such that $|\zeta|=|\zeta|_i=1$ and we let $z_n=s+T^{-n}\zeta$. Remark that
 $|z_n|_i=q^{-n}$ for all non-negative integers $n$.
\begin{lem }\label{lemcv2}
Let $\varepsilon>0$ be a real number. There exists a non-negative integer $n_0$ such that for all $n \geq n_0$, 
if $\gamma$ satisfies $|z_n-s|_i> |\gamma \omega|_i$ then $|F_{\gamma,s}(z_n)-1|\leq \varepsilon$.
\end{lem }
\begin{proof}
Let $\varepsilon>0$ be fixed and  $n \geq 0$. First note that $|c \omega+d|\geq |c \omega|_i=|c||\omega|_i$.
Hence, if $|c|> |\omega|_i^{-1} q^n$ or if $|d|>\max\left(|c \omega|,\left(| \omega |_i^{-1} q^n\right)^{\frac{1}{2}},q^n \right)$, we have 
$$|\gamma \omega|_i=\dfrac{|\omega|_i}{|c \omega+d|^2}<|z_n-s|_i=q^{-n}.$$
For all the pairs $(c,d)$ satisfying at least one of the two conditions on $|c|$ or $|d|$, we have $|c \omega+d|>q^n$ and since $|\gamma \omega|_i<|z_n-s|_i$ we have $|z_n-\gamma \omega| \geq |z_n-s-\gamma \omega|_i=|z_n-s|_i$. So we obtain
\begin{align*}
|F_{\gamma,s}(z_n)-1| &\leq \dfrac{|z_n-s|C_{\omega,\alpha,s}}{|z_n-s|_i|c \omega+d|} & \mathrm{by} \ \eqref{majorationFg}\\
& \leq \dfrac{C_{\omega,\alpha,s}}{|c \omega+d|} \leq C_{\omega, \alpha,s}q^{-n}
\end{align*}
which is less than or equal to $\varepsilon$ when $n$ is large enough.
\end{proof}
\begin{lem }\label{lemcv3}
Let $\varepsilon>0$ be a real number. There exists $n_1\geq 0$ such that for all $n \geq n_1$, if $\gamma \in \Gamma$ satisfies $|\gamma \omega|_i>|z_n-s|_i$ then $|F_{\gamma,s}(z_n)-1|\leq \varepsilon$.
\end{lem }
\begin{proof}
Let $\varepsilon >0$ be fixed and $n \geq 0$ be large enough. If $\gamma \in \Gamma $ satisfies $|\gamma \omega|_i>|z_n-s|_i$ 
then $|s-\gamma \omega|\geq |\gamma\omega|_i>|z_n-s|_i=|z_n|_i=|z_n-s|$. 
By the ultrametric inequality, we have 
$|z_n-\gamma \omega|=|(z_n-s)+s-\gamma \omega|=|s- \gamma \omega|\geq \dfrac{\kappa_{\omega,s}}{|c \omega+d|}$ 
where $\kappa_{\omega,s}$ is independent of $\gamma \in \Gamma$ (see \citep[proof of Lemma 5.3.9]{gekelerrevjac96}). 
Then by \eqref{majorationFg}, we obtain 
$|F_{\gamma,s}(z_n)-1| \leq |z_n-s|C_{\omega, \alpha,s} \kappa_{\omega,s}^{-1}$, which is less than or equal to $\varepsilon$ if $n$ is large enough.
\end{proof}
\begin{coro}\label{corocv4}
Let $\varepsilon>0$. There exists $n_2 \geq 0$ such that for all $n \geq n_2$ and all $\gamma \in \Gamma$, we have
$|F_{\gamma,s}(z_n)-1|\leq \varepsilon$.
\end{coro}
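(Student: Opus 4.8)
The plan is to obtain Corollary~\ref{corocv4} by simply merging Lemmas~\ref{lemcv2} and~\ref{lemcv3}. Given $\varepsilon>0$, I would let $n_0$ be the integer produced by Lemma~\ref{lemcv2} and $n_1$ the one produced by Lemma~\ref{lemcv3} (both for this $\varepsilon$), and set $n_2=\max(n_0,n_1)$. For $n\geq n_2$ and $\gamma\in\Gamma$, Lemma~\ref{lemcv2} yields $|F_{\gamma,s}(z_n)-1|\leq\varepsilon$ whenever $|z_n-s|_i>|\gamma\omega|_i$, and Lemma~\ref{lemcv3} yields the same bound whenever $|\gamma\omega|_i>|z_n-s|_i$. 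So the whole argument reduces to verifying that these two strict inequalities already cover every $\gamma\in\Gamma$, i.e. that the equality $|\gamma\omega|_i=|z_n-s|_i$ never occurs.

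To rule out that equality I would use the normalization fixed at the start of the section, namely $|\omega|=|\omega|_i\in q^{\Q-\Z}$, together with $|z_n-s|_i=q^{-n}\in q^{\Z}$ coming from $z_n=s+T^{-n}\zeta$ with $|\zeta|=|\zeta|_i=1$. Writing $\gamma=\begin{pmatrix} a&b\\c&d\end{pmatrix}$ with $ad-bc\in\Fq^{*}$ and using $|\gamma\omega|_i=|\omega|_i\,|c\omega+d|^{-2}$, one distinguishes two cases. If $c=0$ then $a,d\in\Fq^{*}$, hence $|c\omega+d|=1$ and $|\gamma\omega|_i=|\omega|_i\notin q^{\Z}$. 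If $c\neq 0$ then $|c\omega|\in q^{\Q-\Z}$ and $|d|\in q^{\Z}\cup\{0\}$ have distinct absolute values, so $|c\omega+d|$ equals either $|c\omega|$ or $|d|$, and with $q^{r}:=|\omega|$, $r\notin\Z$, the value $|\gamma\omega|_i=|\omega|_i\,|c\omega+d|^{-2}$ equals $q^{-r-2\deg c}$ or $q^{r-2\deg d}$, whose exponents are not integers. Thus $|\gamma\omega|_i\notin q^{\Z}$ in all cases, so $|\gamma\omega|_i\neq q^{-n}=|z_n-s|_i$ for every $n$ and every $\gamma$, and the dichotomy is exhaustive.

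I do not anticipate any real difficulty: Corollary~\ref{corocv4} is a bookkeeping statement that merely glues the two preceding lemmas together. The only point deserving care is the exhaustiveness of the case split above, and it is exactly to secure that exhaustiveness --- and, incidentally, to keep the estimates in Lemmas~\ref{lemcv2}--\ref{lemcv3} clean through the identity $|z_n-s|=|z_n-s|_i$ --- that $\omega$ was chosen with $q$-adically irrational absolute value from the outset.
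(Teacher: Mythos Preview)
Your proposal is correct and follows essentially the same route as the paper: take $n_2=\max(n_0,n_1)$, invoke the two lemmas for the strict inequalities, and rule out the equality case using the choice $|\omega|=|\omega|_i\in q^{\Q-\Z}$ to force $|c\omega+d|=\max(|c\omega|,|d|)$ and hence $|\gamma\omega|_i\in q^{\Q-\Z}$. Your case split on $c=0$ versus $c\neq 0$ simply spells out this last step in slightly more detail than the paper does.
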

\begin{proof}
Let $\varepsilon>0 $. We let $n_2=\max(n_0,n_1)$ where $n_0$ and $n_1$ are taken as in Lemma \ref{lemcv2} and \ref{lemcv3}.  We derive $|F_{\gamma,s}(z_n)-1|< \varepsilon$ if $|\gamma \omega|_i \neq |z_n-s|_i$. 
Moreover since $|\omega|=|\omega|_i\in q^{\Q-\Z}$, for all 
$\gamma=\begin{pmatrix} a&b\\c&d\end{pmatrix}\in \widetilde{\Gamma}$ we have $|c \omega+d|=\max(|c \omega|,|d|)$. Hence 
we get $|\gamma \omega|_i\in q^{\Q-\Z}$ for all $\gamma \in \widetilde{\Gamma}$, therefore 
$|\gamma \omega|_i \neq |z_n-s|_i=q^{-n}$.
\end{proof}
From Corollary~\ref{corocv4}, there exists $n_2$ such that for all $n \geq n_2$,
$$
\left|\Prod_{\gamma \in \widetilde{\Gamma}}\dfrac{s-\gamma \omega}{s-\gamma \alpha \omega}-\Prod_{\gamma \in  \widetilde{\Gamma}}\dfrac{z_n-\gamma \omega}{z_n-\gamma \alpha \omega} \right| \leq C\varepsilon,
$$
where the constant $C$ is as in \eqref{ConstC}. This concludes the proof of Theorem~\ref{Prodcusp}.
\section{Hecke operators}\label{HeckS}
From now on, the arithmetic subgroup $\Gamma$ will be $\Gamma_0(n)$ with $n \in A$ non constant and monic. 
Let $\varphi \in \Ht(\T,B)^\Gamma$ where $B=\Z,\Q,\R$ or $\mathbb{C}$. We consider $\varphi$ as a map on $\Gl(\K)$. Let $\mathfrak{m}=(m) \subset A$ be an ideal coprime to $(n)$. We set
$$
T_\mathfrak{m} (\varphi) (e)= \underset{ad=m,(a,n)=1}{\underset{\deg(d)<\deg(b)}{\underset{a \ \mathrm{monic}  }{\Sum_{a,b,d \in A}}}}\varphi \left( \begin{pmatrix}
a&b \\0&d 
\end{pmatrix}e
\right) \qquad (e \in E(\T)).
$$ 
Then we have $T_{\mathfrak{m}}(\varphi) \in \Ht(\T,B)^\Gamma$ and the operator $T_\mathfrak{m}$ is called 
the $\mathfrak{m}$-th Hecke operator. The following classical statements may be found for instance in \citep[Section~7]{MR1630606}.
%
\begin{prop}
The Hecke operators have the following properties:
\begin{enumerate}
\item Let $\mathfrak{p}\subset A$ be a prime ideal of A and $k $ be a nonnegative integer. Then $T_{\mathfrak{p}^k}$ is a polynomial in $T_\mathfrak{p}$.
\item Let $\mathfrak{m},\mathfrak{m'}$ be two coprime ideal of $A$, then $T_{\mathfrak{m}\mathfrak{m'}}=T_{\mathfrak{m}}T_{\mathfrak{m'}}$.
\item Hecke operators commute with each other.
\end{enumerate}
\end{prop}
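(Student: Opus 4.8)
The plan is to deduce all three assertions from the behaviour of the explicit upper-triangular representatives defining $T_{\mathfrak{m}}$. Set $\Gamma=\Gamma_0(n)$, and for $\mathfrak{m}=(m)$ coprime to $(n)$ let $R_{\mathfrak{m}}$ be the set of matrices $\begin{pmatrix} a & b \\ 0 & d\end{pmatrix}$ with $ad=m$, $a$ monic, $\deg b<\deg d$ and $(a,n)=1$, so that $T_{\mathfrak{m}}\varphi(e)=\sum_{g\in R_{\mathfrak{m}}}\varphi(ge)$; as in the classical case, $R_{\mathfrak{m}}$ is a complete set of representatives for $\Gamma\backslash\Delta_{\mathfrak{m}}$ (see \citep[Section~7]{MR1630606}), where $\Delta_{\mathfrak{m}}$ consists of the $g\in M_2(A)$ with $\det g\in\Fq^\times m$, lower-left entry divisible by $n$ and upper-left entry coprime to $n$. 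The device I use repeatedly is that, after multiplying two such matrices and reducing the upper-right entry modulo the lower-right one by an upper-triangular element of $\Gamma$, one obtains a matrix in standard form --- call it $\mathrm{std}(gg')$ --- and that whenever a standard matrix $r$ has content divisible by $P$ one may write $r=P\,r''$ with $r''$ again standard; since $P$ lies in $\mathcal{Z}$ it acts trivially on $\T$, so $\varphi(r\,e)=\varphi(r''e)$.

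For (2): when $\mathfrak{m}$ and $\mathfrak{m}'$ are coprime I would show that the map $R_{\mathfrak{m}}\times R_{\mathfrak{m}'}\to R_{\mathfrak{m}\mathfrak{m}'}$, $(g,g')\mapsto\mathrm{std}(gg')$, is a bijection (in particular no scalar is ever extracted). For the upper-left entries: if $mm'=AD$ with $A$ monic, unique factorization in $A$ together with the coprimality of $m$ and $m'$ gives a unique splitting $A=a_1a_2$, $D=d_1d_2$ with $a_1d_1=m$, $a_2d_2=m'$. For the upper-right entries: writing $\mathrm{std}$ of $\begin{pmatrix} a_1 & b_1 \\ 0 & d_1\end{pmatrix}\begin{pmatrix} a_2 & b_2 \\ 0 & d_2\end{pmatrix}$ as $\begin{pmatrix} a_1a_2 & (a_1b_2+b_1d_2)\bmod d_1d_2 \\ 0 & d_1d_2\end{pmatrix}$ and using that $d_1,d_2$ are coprime and $(a_1,d_2)=1$, the Chinese Remainder Theorem shows $(b_1\bmod d_1,\,b_2\bmod d_2)\mapsto(a_1b_2+b_1d_2\bmod d_1d_2)$ is a bijection. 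Summing $\varphi$ over both sides then yields $T_{\mathfrak{m}}T_{\mathfrak{m}'}=T_{\mathfrak{m}\mathfrak{m}'}$.

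For (1): fix a prime $\mathfrak{p}=(P)$ coprime to $n$ and expand $T_{\mathfrak{p}}T_{\mathfrak{p}^k}\varphi(e)=\sum_{g\in R_{\mathfrak{p}^k},\,g'\in R_{\mathfrak{p}}}\varphi(\mathrm{std}(gg')\,e)$. The point I would establish is that the surjective map $(g,g')\mapsto\mathrm{std}(gg')$ from $R_{\mathfrak{p}^k}\times R_{\mathfrak{p}}$ to $R_{\mathfrak{p}^{k+1}}$ has all fibres of size $1$, except over the matrices of the form $P\,r''$ with $r''\in R_{\mathfrak{p}^{k-1}}$ (equivalently, those with content divisible by $P$), where the fibre has size $1+|\mathfrak{p}|$ with $|\mathfrak{p}|:=q^{\deg P}$; this is the function-field transcription of the classical fibre count, checked by the explicit matrix multiplications, and it is consistent with the cardinality identity $|R_{\mathfrak{p}}|\cdot|R_{\mathfrak{p}^k}|=|R_{\mathfrak{p}^{k+1}}|+|\mathfrak{p}|\cdot|R_{\mathfrak{p}^{k-1}}|$. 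Using $\varphi(P\,r''\,e)=\varphi(r''e)$, summing $\varphi$ over the fibres yields $T_{\mathfrak{p}}T_{\mathfrak{p}^k}=T_{\mathfrak{p}^{k+1}}+|\mathfrak{p}|\,T_{\mathfrak{p}^{k-1}}$ for $k\ge1$, together with $T_{\mathfrak{p}^0}=\mathrm{id}$ and $T_{\mathfrak{p}^1}=T_{\mathfrak{p}}$; an induction on $k$ then shows $T_{\mathfrak{p}^k}$ is a polynomial in $T_{\mathfrak{p}}$. The delicate point I expect to be the main obstacle is precisely this fibre-size computation, together with keeping track of the $\Gamma_0(n)$-condition throughout.

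Finally (3) is formal: $T_{\mathfrak{p}^a}$ and $T_{\mathfrak{p}^b}$ commute because by (1) both are polynomials in $T_{\mathfrak{p}}$; for distinct primes $\mathfrak{p}\ne\mathfrak{q}$, part (2) gives $T_{\mathfrak{p}^a}T_{\mathfrak{q}^b}=T_{\mathfrak{p}^a\mathfrak{q}^b}=T_{\mathfrak{q}^b}T_{\mathfrak{p}^a}$; and writing $\mathfrak{m}=\prod_i\mathfrak{p}_i^{a_i}$ we have $T_{\mathfrak{m}}=\prod_iT_{\mathfrak{p}_i^{a_i}}$ in any order by (2), hence $T_{\mathfrak{m}}$ commutes with every prime-power operator and therefore with every $T_{\mathfrak{n}}$. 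Alternatively, (3) follows following Shimura from the anti-automorphism $\begin{pmatrix} a & b \\ c & d\end{pmatrix}\mapsto\begin{pmatrix} d & -b \\ -c & a\end{pmatrix}$, which preserves $\Gamma_0(n)$ and each $\Delta_{\mathfrak{m}}$ and fixes every double coset, forcing the Hecke algebra to be commutative.
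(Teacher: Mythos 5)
Your proposal is correct, but it does something the paper never does: the paper offers no proof of this proposition at all, treating it as classical and simply pointing to \citep[Section~7]{MR1630606}, whereas you give a self-contained argument with the explicit upper-triangular representatives. Your route is the standard double-coset computation and all its ingredients check out: the reduction of the upper-right entry is achieved by left multiplication by a unipotent $\begin{pmatrix}1&t\\0&1\end{pmatrix}\in\Gamma_0(n)$, so $\Gamma$-invariance of $\varphi$ justifies replacing $gg'$ by $\mathrm{std}(gg')$; in the coprime case the splitting $A=a_1a_2$, $D=d_1d_2$ and the CRT bijection on the $b$-entries (using $(d_1,d_2)=1$ and $(a_1,d_2)=1$) are exactly right; and the fibre count you flag as the delicate point is indeed correct as stated --- over a standard matrix $(A,B;0,D)$ of determinant $P^{k+1}$ the fibre is $1$ unless $P$ divides the content, in which case the branch $a_2=1$ contributes $|\mathfrak{p}|$ solutions and the branch $a_2=P$ contributes $1$, giving $1+|\mathfrak{p}|$; together with the triviality of the scalar $P$ on $\T$ (which is why no diamond operator appears) this yields $T_{\mathfrak{p}}T_{\mathfrak{p}^k}=T_{\mathfrak{p}^{k+1}}+|\mathfrak{p}|\,T_{\mathfrak{p}^{k-1}}$ and hence (1), and (3) follows formally as you say. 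What your approach buys is an elementary, verifiable proof at the level of the paper's own definition of $T_{\mathfrak m}$; what the citation buys is brevity and the identification with the abstract Hecke algebra. Two small remarks: the paper's displayed condition $\deg(d)<\deg(b)$ is a typo for $\deg(b)<\deg(d)$, and you silently (and correctly) use the latter; also your indexing $(g,g')\in R_{\mathfrak m}\times R_{\mathfrak m'}$ corresponds, after unwinding $T_{\mathfrak m}(T_{\mathfrak m'}\varphi)$, to products with the inner operator's representative on the left --- harmless here by symmetry, but worth keeping straight if you write the argument out in full. Your alternative proof of (3) via the main involution $g\mapsto\begin{pmatrix}d&-b\\-c&a\end{pmatrix}$ also works, but note that ``fixes every double coset'' is not free: it relies on the elementary-divisor classification of $\Gamma_0(n)$-double cosets with determinant coprime to $n$, another classical input you would need to cite or prove.
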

It is also possible to define a Hecke action on $\overline{\Gamma}$ (see \citep[9.3]{gekelerrevjac96}). If $\mathfrak{p}=(P)$ is 
an ideal of $A$ coprime to $n$, we let $\tau_P=\begin{pmatrix}P&0\\0&1\end{pmatrix} \in \Gl(\K)$ and we define 
$$
\Delta_P=\Gamma \cap \tau_P \Gamma \tau_P^{-1}.
$$
For $\alpha \in \overline{\Gamma}$, we define $T_\mathfrak{p}( \alpha)$ as 
\begin{equation}\label{Heckgam}
T_\mathfrak{p}(\alpha)=\tau_P^{-1}\Prod_{\alpha_i \in \Delta_P \backslash \Gamma}\alpha_i\alpha \alpha_{\sigma(i)}^{-1}\tau_P
\end{equation}
where $\alpha_i$ runs through a system of representatives of $\Delta_P \backslash \Gamma$ and where $\sigma$ is the permutation 
of $\Delta_P \backslash \Gamma$ such that $\alpha_i \alpha \alpha_{\sigma(i)}^{-1} \in \Delta_P$.
\begin{lem }[{\citep[Lemma 9.3.2]{gekelerrevjac96}}]
Let $\mathfrak{a}=(a)$ be an ideal of $A$ coprime to $n$ and $\alpha \in \Gamma$. We have $j(T_\mathfrak{a}(\alpha))=T_\mathfrak{a}(j(\alpha))$. In other words, the action of Hecke operators commutes with the isomorphism $j$.
\end{lem }
\begin{prop}[{\citep[Lemma 9.3.3]{gekelerrevjac96}}]
The Hecke operators on $\overline{\Gamma}$ defined in \eqref{Heckgam} are self-adjoint with respect to the 
bilinear map $(\alpha, \beta) \mapsto c_\alpha(\beta)$.
\end{prop}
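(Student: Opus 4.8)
The plan is to unwind both sides of the desired equality $c_{T_{\mathfrak p}(\alpha)}(\beta)=c_\alpha(T_{\mathfrak p}(\beta))$ into finite products of multipliers, and to reduce it to the coincidence of two descriptions of the operator $T_{\mathfrak p}$ on $\overline{\Gamma}$. Write $c(\alpha,\beta):=c_\alpha(\beta)$; by Theorem~\ref{prétheta}(5) this is a symmetric bilinear pairing on $\overline{\Gamma}$, and since $c_\alpha$ is the multiplier of the holomorphic theta function $u_\alpha$ we have $c_\alpha(\beta)=u_\alpha(\beta\omega)/u_\alpha(\omega)$ for any fixed $\omega\in\Omega$, the denominator being non-zero. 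I will use one further input: the action of $T_{\mathfrak p}$ on these theta functions, namely
\begin{equation*}
u_{T_{\mathfrak p}(\alpha)}(z)=\prod_{i=1}^{r}u_\alpha(g_i z),
\end{equation*}
where $g_1,\dots,g_r$ is a set of representatives for $\Gamma\backslash\Gamma\tau_P\Gamma$ --- concretely, the upper-triangular matrices occurring in the definition of $T_{\mathfrak m}$ in Section~\ref{HeckS}, all of which fix the cusp $\infty$. This is standard and may be obtained by comparing the associated harmonic cochains, using \citep[Lemma~9.3.2]{gekelerrevjac96} together with the fact that a holomorphic theta function for $\Gamma$ with value $1$ at the cusp $\infty$ is determined by its associated harmonic cochain (both sides above take the value $1$ at $\infty$ because each $g_i$ fixes $\infty$).

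Fix $\alpha,\beta\in\overline{\Gamma}$. Right multiplication by $\beta\in\Gamma$ permutes the right cosets $\Gamma g_i$ inside $\Gamma\tau_P\Gamma$, so there exist a permutation $\sigma=\sigma_\beta$ of $\{1,\dots,r\}$ and elements $\mu_i=\mu_i(\beta)\in\Gamma$ with $g_i\beta=\mu_i\,g_{\sigma(i)}$. Using the multiplier property $u_\alpha(\mu z)=c_\alpha(\mu)u_\alpha(z)$ for $\mu\in\Gamma$, and that $\sigma$ is a permutation (so the remaining product below equals $1$),
\begin{equation*}
c_{T_{\mathfrak p}(\alpha)}(\beta)=\frac{u_{T_{\mathfrak p}(\alpha)}(\beta\omega)}{u_{T_{\mathfrak p}(\alpha)}(\omega)}=\prod_{i=1}^{r}\frac{u_\alpha(\mu_i\,g_{\sigma(i)}\omega)}{u_\alpha(g_i\omega)}=\Bigl(\prod_{i=1}^{r}c_\alpha(\mu_i)\Bigr)\prod_{i=1}^{r}\frac{u_\alpha(g_{\sigma(i)}\omega)}{u_\alpha(g_i\omega)}=\prod_{i=1}^{r}c_\alpha(\mu_i).
\end{equation*}
By bilinearity of $c$ this equals $c_\alpha(M_\beta)$, where $M_\beta:=\prod_{i=1}^{r}\mu_i(\beta)$ is computed in $\overline{\Gamma}$. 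A short verification shows that $M_\beta$ does not depend on the choice of representatives $g_i$ (replacing $g_i$ by $\nu_i g_i$ with $\nu_i\in\Gamma$ replaces $\mu_i$ by $\nu_i\mu_i\nu_{\sigma(i)}^{-1}$, whose product over $i$ is unchanged in the abelian group $\overline{\Gamma}$) and that $\beta\mapsto M_\beta$ is a group endomorphism of $\overline{\Gamma}$. Hence
\begin{equation*}
c_{T_{\mathfrak p}(\alpha)}(\beta)=c_\alpha(M_\beta)\qquad\text{for all }\alpha,\beta\in\overline{\Gamma}.
\end{equation*}

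It therefore suffices to prove that $M=T_{\mathfrak p}$ as endomorphisms of $\overline{\Gamma}$; the displayed identity then becomes $c_{T_{\mathfrak p}(\alpha)}(\beta)=c_\alpha(T_{\mathfrak p}(\beta))$, which is the assertion of the Proposition. To establish $M=T_{\mathfrak p}$ I would compose with the isomorphism $j$ of Theorem~\ref{isomj}: by \citep[Lemma~9.3.2]{gekelerrevjac96} one has $j(T_{\mathfrak p}(\beta))=\sum_{i}g_i^{*}\,j(\beta)$ in $\Hzo$, while $j(M_\beta)=\sum_{i}j(\mu_i(\beta))$, and one must check that these two harmonic cochains coincide. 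This is exactly the reconciliation of the ``right-coset'' description $g_i\beta=\mu_i g_{\sigma(i)}$ with the ``double-coset'' description~\eqref{Heckgam} of $T_{\mathfrak p}$, i.e. the statement that the two standard presentations of a Hecke correspondence give the same operator; carrying it out requires matching a transversal of $\Delta_P\backslash\Gamma$ with one of $\Gamma\backslash\Gamma\tau_P\Gamma$ (these are related by a transpose-type twist, not just a relabeling) and keeping track of the conjugations by $\tau_P$ and $\tau_P^{-1}$. I expect this identification to be the main obstacle, even though it is purely combinatorial, with no analysis involved. (The weaker statement $\ord_\infty c_{T_{\mathfrak p}(\alpha)}(\beta)=\ord_\infty c_\alpha(T_{\mathfrak p}(\beta))$ comes out more quickly, since $-\ord_\infty c$ agrees with the monodromy pairing on $\Hzo\cong\overline{\Gamma}$, which is self-adjoint for the Hecke action, and $j$ intertwines the two Hecke actions by \citep[Lemma~9.3.2]{gekelerrevjac96}.)
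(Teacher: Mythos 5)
Your reduction is correct as far as it goes, and it is genuinely useful: granting the exact identity $u_{T_{\mathfrak p}(\alpha)}(z)=\prod_i u_\alpha(g_iz)$ (which your normalization-at-$\infty$ argument via harmonic cochains can legitimately deliver, since the $g_i$ fix $\infty$ and $u_\alpha(\infty)=1$), the computation $c_{T_{\mathfrak p}(\alpha)}(\beta)=\prod_i c_\alpha(\mu_i)=c_\alpha(M_\beta)$, the independence of $M_\beta$ from the choice of transversal, and the fact that $\beta\mapsto M_\beta$ is a homomorphism of $\overline{\Gamma}$ are all sound. Note, for calibration, that the paper itself gives no proof of this proposition (it is quoted from \citep{gekelerrevjac96}), so your attempt is judged on its own.

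The genuine gap is the step you yourself flag and defer: $M_\beta=T_{\mathfrak p}(\beta)$ in $\overline{\Gamma}$. This is not a bookkeeping exercise in relabeling cosets; it is where all the arithmetic content of self-adjointness sits. Your $M$ is the transfer attached to the right cosets of $\Gamma\backslash\Gamma\tau_P\Gamma$ with $\tau_P=\mathrm{diag}(P,1)$, whereas \eqref{Heckgam}, after unwinding ($\Delta_P\alpha_i\mapsto\Gamma\tau_P^{-1}\alpha_i$ identifies $\Delta_P\backslash\Gamma$ with $\Gamma\backslash\Gamma\tau_P^{-1}\Gamma$), is the transfer attached to the double coset of $\tau_P^{-1}$, i.e.\ up to a central scalar of $\mathrm{diag}(1,P)$. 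So proving $M=T_{\mathfrak p}$ amounts to proving $\Gamma\,\mathrm{diag}(P,1)\,\Gamma=\Gamma\,\mathrm{diag}(1,P)\,\Gamma$ for $\Gamma=\Gamma_0(n)$ and then checking that equal double cosets give equal transfers; the double-coset identity is true precisely because $(P,n)=1$ (an elementary-divisor/B\'ezout argument respecting the congruence condition), and nothing in your argument up to that point uses coprimality. Indeed, everything you wrote applies verbatim to $\mathfrak p\mid n$, where the corresponding operator is not self-adjoint in general, so the omitted step cannot be a formality: it is the proof. Until you supply the double-coset symmetry and the matching of the two transversals (including the $\tau_P^{\pm1}$-conjugations and the permutation bookkeeping), the argument is an honest reduction, not a proof. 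The same remark applies to your parenthetical about $\ord_\infty$: self-adjointness of $T_{\mathfrak p}$ for the Petersson/monodromy pairing rests on exactly the same double-coset symmetry, so it cannot be invoked as an independent shortcut.
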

Let $J_\Gamma$ be the Jacobian of $\overline{M}_\Gamma$. It is an abelian variety over $K$ of dimension $g(\Gamma)$ 
where $g(\Gamma)$ is the genus of the modular curve $\overline{M}_\Gamma$.
\begin{def }[{\citep[VIII,1]{gekelerdrinfeldmod}}]
Let $\mathfrak{p}=(P)$ be a prime ideal of $A$ and $\Delta_P$, $\tau_P$ as above. The $\mathfrak{p}$-th Hecke operator 
on $\overline{M}_\Gamma$ is given by the correspondence
$$
T_\mathfrak{p}\omega=\Sum_{\alpha \in \Delta_P\backslash \Gamma}\tau_P^{-1} \alpha \omega, \ \omega \in \overline{M}_\Gamma(\C).
$$
\end{def }
The action of Hecke operators on $\overline{M}_\Gamma$ induces an action on $J_\Gamma$. Indeed if 
$D=\sum_{}a_i(\omega_i) \in J_\Gamma(\C)$ then $T_\mathfrak{p}(D)=\sum_{}a_i(T_\mathfrak{p}(\omega_i))$ 
(see \citep[Chapter VIII]{gekelerdrinfeldmod} for more details). The action of Hecke operators on $J_\Gamma(\C)$ induces 
the following action on $\C^{*}/t^{\Z}\simeq E(\C)$ (see \citep[9.4 and 9.5.6]{gekelerrevjac96})
\begin{equation}\label{hecke-fonctiontheta}
T_\mathfrak{m}u_\alpha(z)= \underset{ad=m,(a,n)=1}{\underset{\deg(d)<\deg(b)}{\underset{a\, \mathrm{monic}  }{\Prod_{a,b,d \in A}}}}u_\alpha \left(\dfrac{az+b}{d} \right).
\end{equation}
\section{Modular parametrization}\label{Modpar}
\begin{prop}[{\citep[7.3.3]{gekelerrevjac96}}]
The analytic group variety $\Hom(\overline{\Gamma},\C^{*})/c(\overline{\Gamma})$ carries the structure of an abelian variety $\K$-isomorphic to $J_\Gamma$.
\end{prop}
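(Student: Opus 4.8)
This proposition is the rigid-analytic (Mumford) uniformization of an abelian variety with totally degenerate reduction, applied here to the Jacobian $J_\Gamma$; the argument is due to Gekeler--Reversat \citep[7.3.3]{gekelerrevjac96}, and I would organize it in four steps: (i) identify $\Hom(\overline{\Gamma},\C^*)$ with a split torus of rank $g:=g(\Gamma)$; (ii) show that $c(\overline{\Gamma})$ is a discrete subgroup of full rank $g$; (iii) exhibit the polarization (Riemann form) so that Mumford's theory produces an abelian variety over $\K$; (iv) identify this abelian variety with $J_\Gamma$ by means of the theta functions of Section~\ref{Thetafct}.

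For step (i): since $\overline{\Gamma}=\Gamma^{\mathrm{ab}}/\Tor(\Gamma^{\mathrm{ab}})$ is free with $\mathrm{rk}_\Z(\overline{\Gamma})=g(\Gamma)=g$, any $\Z$-basis $\alpha_1,\dots,\alpha_g$ gives a rigid-analytic group isomorphism $\Hom(\overline{\Gamma},\C^*)\xrightarrow{\sim}(\C^*)^g$, $\chi\mapsto(\chi(\alpha_1),\dots,\chi(\alpha_g))$. For steps (ii) and (iii) the essential input is Theorem~\ref{prétheta}(5): the pairing $(\alpha,\beta)\mapsto c_\alpha(\beta)$ is symmetric, bilinear and $\K^*$-valued. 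Hence $\langle\alpha,\beta\rangle:=-\ord_\infty\bigl(c_\alpha(\beta)\bigr)$ is a well-defined symmetric $\Z$-bilinear form on $\overline{\Gamma}$, and I would prove it is positive definite by transporting it through the isomorphism $j\colon\overline{\Gamma}\xrightarrow{\sim}\Hzo$ of Theorem~\ref{isomj}: an explicit computation of $\ord_\infty c_\alpha(\beta)$ via \eqref{coeffmul} in terms of the values of the harmonic cochains $j(\alpha),j(\beta)$ along the ends of $\Gamma\backslash\T$ (the Gekeler--Reversat calculation) shows that $\langle\cdot,\cdot\rangle$ agrees, up to a positive constant, with the Petersson inner product \eqref{Peter} restricted to the integral lattice $\Hzo$, which is positive definite. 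Positive definiteness forces the composite $\overline{\Gamma}\xrightarrow{\overline{c}}\Hom(\overline{\Gamma},\C^*)\xrightarrow{\ord_\infty}\Hom(\overline{\Gamma},\Z)\cong\Z^g$ to be injective with finite cokernel, so $c(\overline{\Gamma})$ is discrete of rank $g$ in $(\C^*)^g$, i.e.\ a lattice, and the same positive-definite form is the Riemann form needed on the quotient.

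Granting these, Mumford's theory of analytic uniformization of abelian varieties (cf.\ \citep[Chapter VIII]{gekelerdrinfeldmod}) shows that $\Hom(\overline{\Gamma},\C^*)/c(\overline{\Gamma})$ is the rigid analytification of an abelian variety; and since the torus, the lattice and the polarization are all defined over $\K$ — because $\overline{c}$ takes values in $\Hom(\overline{\Gamma},\K^*)$ by Theorem~\ref{prétheta}(5) — the abelian variety is defined over $\K$. For step (iv): fixing a base point $\omega\in\Omega$, the assignment $\eta\mapsto\bigl[\alpha\mapsto c(\omega,\eta,\alpha)\bigr]$ is, by the multiplier properties in Theorem~\ref{prétheta}, a $\Gamma$-invariant analytic map $\Omega\to\Hom(\overline{\Gamma},\C^*)/c(\overline{\Gamma})$, so it descends to $M_\Gamma$ and, by Theorem~\ref{thcusp}, extends to the cusps to give a morphism $\overline{M}_\Gamma\to\Hom(\overline{\Gamma},\C^*)/c(\overline{\Gamma})$. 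One checks this is, up to translation, the Abel--Jacobi morphism of $\overline{M}_\Gamma$, so by the universal property of the Jacobian it factors through a surjective homomorphism $\psi\colon J_\Gamma\to\Hom(\overline{\Gamma},\C^*)/c(\overline{\Gamma})$ of abelian varieties over $\K$. Finally $\psi$ is an isomorphism: both sides have dimension $g(\Gamma)$, and since $\overline{M}_\Gamma$ is a Mumford curve $J_\Gamma$ itself has totally degenerate reduction at $\infty$; computing its toric period lattice through the theta functions $\theta(\omega,\eta,\cdot)$ shows it equals $c(\overline{\Gamma})\subset\Hom(\overline{\Gamma},\C^*)$, and uniqueness of the uniformization yields $J_\Gamma\cong\Hom(\overline{\Gamma},\C^*)/c(\overline{\Gamma})$ over $\K$.

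The main obstacle is step (iii): proving that $\langle\cdot,\cdot\rangle$ — equivalently, the valuation of the multiplier pairing — is positive definite, which is exactly what upgrades the analytic quotient from a mere analytic group to an abelian variety, together with the matching of period lattices in step (iv). Both reduce to Gekeler--Reversat's explicit description of the multipliers $c_\alpha(\beta)$ in terms of harmonic cochains and to the structure theory of Mumford curves; once those are available, steps (i) and (ii) are formal.
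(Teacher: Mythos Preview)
The paper does not prove this proposition at all: it is stated with a citation to \citep[7.3.3]{gekelerrevjac96} and immediately used, so there is no ``paper's own proof'' to compare against. Your sketch is a faithful outline of the Gekeler--Reversat argument that the citation points to, and the four-step structure (torus, lattice, Riemann form, Abel--Jacobi identification) is exactly how that proof is organized.

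Two remarks on the sketch itself. First, you invoke Theorem~\ref{isomj} ($j$ an isomorphism) to transport the valuation pairing to the Petersson product, but that theorem is specific to $\Gamma_0(n)$; for the positive-definiteness statement one only needs that $j$ is injective with finite cokernel (the lemma preceding Theorem~\ref{isomj}), and the actual Gekeler--Reversat computation expresses $-\ord_\infty c_\alpha(\beta)$ directly as an intersection pairing on the quotient graph, which is then matched with the Petersson form --- so your idea is right but the dependence on Theorem~\ref{isomj} should be weakened. Second, the reference \citep[Chapter VIII]{gekelerdrinfeldmod} is to the chapter on Hecke operators, not to Mumford's uniformization; the correct pointer for the analytic-to-algebraic step is Mumford's own work or Manin--Drinfeld's rigid-analytic theory as cited in \citep{gekelerrevjac96}. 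The step you flag as the ``main obstacle'' --- positive definiteness of the valuation pairing --- is indeed the substantive analytic input, and your reduction of it to the Petersson product is the right strategy.
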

In other words, we have the exact sequence
\begin{equation}
1 \rightarrow \overline{\Gamma} \overset{c}{\rightarrow }\Hom(\overline{\Gamma},\C^{*}) \rightarrow J_\Gamma(\C) \rightarrow 0.
\end{equation}
The Hecke operators act on each of its terms and the map $\overline{c}\colon \overline{\Gamma} \rightarrow \C$ is compatible with 
their action (see \citep[9.3.3]{gekelerrevjac96}). The same holds for the projection 
$\Hom(\overline{\Gamma}, \C^{*}) \rightarrow J_\Gamma(\C)$.

Let $E/K$ be an elliptic curve with split multiplicative reduction at the place $\infty=1/T$ of~$K$. Equivalently, $E$ has Tate parametrization
\begin{equation}
E(\K) \simeq \K^{*}/t^{\Z},
\end{equation}
for some $t \in \K^{*}$ with $|t|<1$. For each prime $\mathfrak{p}$, we set
$$
\lambda_\mathfrak{p}=
\begin{cases}
q^{\deg(\mathfrak{p})}+1- \card(\overline{E}(\F_{\mathfrak{p}})) & \mathrm{if} \ E \mathrm{ \ has \ good \ reduction  \ at} \ \mathfrak{p}, \\
1 & \mathrm{if \ }  E \mathrm{ \ has \ split\  multiplicative \ reduction \ at } \ \mathfrak{p}, \\
-1 & \mathrm{if \ }  E \text{ has non-split multiplicative reduction at} \ \mathfrak{p}, \\
0 & \mathrm{otherwise,}\end{cases}
$$
where $\F_\mathfrak{p}=A/\mathfrak{p}$ and $\overline{E}$ denote the reduction of $E$ at $\mathfrak{p}$. From  the work of Weil, Jacquet-Langlands, Grothendieck, Drinfeld and Zarhin, there exists a unique newform $\varphi \in \Hzo$ which is it is not divisible on $\Hzo$ and such that
$$
\left\{
\begin{array}{ll}
c(\varphi,1)=1 & \\
T_\mathfrak{m}(\varphi)=q^{\deg(\mathfrak{m} )}c(\varphi,\mathfrak{m})\varphi & \mbox{for all  ideal } \mathfrak{m}\subset A,\\
c(\varphi,\mathfrak{p})=q^{-\deg(\mathfrak{p})}\lambda_\mathfrak{p} & \mbox{for all  prime } \mathfrak{p} \subset A
\end{array}
\right.
$$
where $\deg(\mathfrak{m})=\log_q\left( \card(A/\mathfrak{m} )\right)$.

For $\mathfrak{m}=(m)$,  the coefficient $c(\varphi, \mathfrak{m})$ is the Fourier coefficient of $\varphi$ given by (see \citep[Section 3]{gekeler1995analytical})
$$
c(\varphi,\mathfrak{m})=q^{-1-\deg(\mathfrak{m})}\Sum_{y \in \Fq^* \backslash \pi \O /\pi^{2+\deg(\mathfrak{m})}\O}\varphi \left(\begin{pmatrix}
\pi^{2+\deg(\mathfrak{m})}&y\\0&1 
\end{pmatrix}\right) \nu(my)
$$
where $\nu \colon \K \rightarrow \Z$ is the character that maps the element $\sum_{i \in \Z}a_i\pi^i$ to $-1$ if $a_{1}=0$ and to $q-1$ otherwise.
Let $\Gamma_\infty$ be the subgroup
$$
\Gamma_\infty=\left\{\begin{pmatrix}
a&b\\0&d
\end{pmatrix} \in \Gamma, \ a,b \in \Fq^*,d \in A\right\}.
$$
Recall that  $\varphi$ can be seen as a map on $Y^{+}(\Gamma_\infty \backslash \T)$ and its Fourier expansion is given by the following expression, for all $j\in\Z$ and $y \in \K \bmod \pi^j \O$ (see \citep{gekeler1995analytical} or \citep{gekeler1995improper}),
\begin{equation}
\varphi \left( \begin{pmatrix}
\pi^j & y \\0&1
\end{pmatrix}\right)= \Sum_{k=0}^{j-2}q^{k+2-j}\underset{\deg(f)=k}{\Sum_{f \in A, \ f \ \mathrm{monic}}}c(\varphi,(f))\nu(fy).
\end{equation}
Identifying $\Hzo$ with $\overline{\Gamma} $ by means of the isomorphism $j$ of Theorem~\ref{isomj}, we may consider $\varphi$ as an element of 
$\overline{\Gamma}$. 
\begin{prop}[{\citep[Proposition 9.5.1]{gekelerrevjac96}, \citep[Theorem 3.2]{gekeler1995analytical}}]
Let $\mathrm{ev}_\varphi$ be the evaluation map at $\varphi$ from $\Hom(\overline{\Gamma}, \C^{*})$ to $\C^{*}$. The subgroup $\mathrm{ev}_\varphi(c(\overline{\Gamma}))$ is isomorphic to $\Z$ and generated by a unique $t \in \K^{*}$ with $|t|<1$.
\end{prop}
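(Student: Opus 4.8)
The plan is to identify $\mathrm{ev}_\varphi\bigl(c(\overline{\Gamma})\bigr)$ with the Tate lattice of the one--dimensional quotient of $J_\Gamma$ cut out by the eigenform $\varphi$, and then to read off the statement from the fact that this quotient is an elliptic curve with split multiplicative reduction at $\infty$. To begin, I would rewrite the group in question in a more tractable form: by part~(5) of Theorem~\ref{prétheta} the pairing $(\alpha,\beta)\mapsto c_\alpha(\beta)$ is symmetric and $\K^{*}$--valued, so $\mathrm{ev}_\varphi(c_\alpha)=c_\alpha(\varphi)=c_\varphi(\alpha)$ and
\[
\mathrm{ev}_\varphi\bigl(c(\overline{\Gamma})\bigr)=\bigl\{\,c_\varphi(\alpha):\alpha\in\overline{\Gamma}\,\bigr\}\subseteq\K^{*};
\]
in particular it is a finitely generated abelian group, being the image of the homomorphism $\alpha\mapsto c_\varphi(\alpha)$ with source the free group $\overline{\Gamma}$ of rank $g(\Gamma)$.

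Next I would use the analytic uniformization of $J_\Gamma$. Since $\varphi$ is not divisible in $\Hzo$ it is a primitive vector of $\overline{\Gamma}$, so $\mathrm{ev}_\varphi\colon\Hom(\overline{\Gamma},\C^{*})\to\C^{*}$ is a surjection of analytic tori; and since the eigenvalues $q^{\deg(\mathfrak m)}c(\varphi,\mathfrak m)$ of $\varphi$ lie in $\Z$ (by the normalization $c(\varphi,\mathfrak p)=q^{-\deg(\mathfrak p)}\lambda_{\mathfrak p}$), the eigenform $\varphi$ cuts out an elliptic--curve quotient $\pi\colon J_\Gamma\to E_\varphi$ over $K$, sitting in an exact sequence $0\to B\to J_\Gamma\to E_\varphi\to0$ with $B$ an abelian subvariety (which inherits split totally degenerate reduction at $\infty$). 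Feeding the morphism of the three short exact sequences relating the Mumford--Tate uniformizations of $B$, $J_\Gamma$ and $E_\varphi$ --- the one for $J_\Gamma$ being $0\to c(\overline{\Gamma})\to\Hom(\overline{\Gamma},\C^{*})\to J_\Gamma(\C)\to0$ --- into the nine lemma shows that the Tate lattice of $E_\varphi$ is the image of $c(\overline{\Gamma})$, namely $\mathrm{ev}_\varphi(c(\overline{\Gamma}))$, and that $E_\varphi(\C)\cong\C^{*}/\mathrm{ev}_\varphi(c(\overline{\Gamma}))$.

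To conclude: $E_\varphi$ is an elliptic curve over $K$ with split totally degenerate reduction at $\infty$, so by Tate's theory $E_\varphi(\K)\cong\K^{*}/t^{\Z}$ for a unique $t\in\K^{*}$ with $0<|t|<1$; comparing this with the uniformization obtained above gives $\mathrm{ev}_\varphi(c(\overline{\Gamma}))=t^{\Z}\cong\Z$, and $t$ is the unique generator of absolute value $<1$ (the only other generator being $t^{-1}$, of absolute value $>1$). I expect the main obstacle to be the identification of $\mathrm{ev}_\varphi(c(\overline{\Gamma}))$ with the Tate lattice of $E_\varphi$ --- that is, pinning down the quotient $E_\varphi$ together with its uniformizing torus $\C^{*}$ and the functoriality of the uniformization under $\pi$; this is standard for abelian varieties with split totally degenerate reduction and is worked out for $J_\Gamma$ in \citep{gekelerrevjac96}, but it is the real input, everything else being formal or quoted. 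As a more hands--on verification that the image has rank exactly $1$, one can also argue directly with the $\Z$--valued pairing $\langle\alpha,\beta\rangle:=\ord_\pi c_\alpha(\beta)$: it is symmetric and Hecke--self--adjoint (apply $\ord_\pi$ to the self--adjointness of the Hecke operators for $(\alpha,\beta)\mapsto c_\alpha(\beta)$), and non--degenerate because $c(\overline{\Gamma})$ is a lattice in $\Hom(\overline{\Gamma},\C^{*})$; multiplicity one for the newform $\varphi$ then forces the orthogonal complement of $\varphi$ in $\overline{\Gamma}\otimes\Q$ to be the Hecke--complement of $\Q\varphi$, of dimension $g(\Gamma)-1$, so that $\ord_\pi\circ\mathrm{ev}_\varphi\circ c$ has a kernel of rank $g(\Gamma)-1$.
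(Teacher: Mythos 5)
The paper itself gives no proof of this proposition---it is quoted from Gekeler--Reversat (Prop.\ 9.5.1) and Gekeler---and those sources argue directly with the symmetric $\K^{*}$-valued pairing $(\alpha,\beta)\mapsto c_\alpha(\beta)$: positive definiteness of $v_\infty(c_\alpha(\beta))$, Hecke self-adjointness, and multiplicity one. Your route, through the elliptic quotient $\pi\colon J_\Gamma\to E_\varphi$ and functoriality of rigid uniformization, is legitimate in principle (and not circular, since the existence of the quotient comes from Drinfeld--Jacquet--Langlands--Zarhin), but as written it has genuine gaps. First, the nine-lemma step proves less than you assert: the lift of $\pi$ to the uniformizing tori is $\mathrm{ev}_\psi$ for some $\psi\in\overline{\Gamma}$, and Hecke equivariance plus multiplicity one only pin down $\psi=m\varphi$ with $m\in\Z\setminus\{0\}$; moreover the snake lemma only shows that the image of $c(\overline{\Gamma})$ has finite index in the Tate lattice of $E_\varphi$ (its cokernel is a finitely generated quotient of $\ker\pi(\C)$, hence finite but not visibly trivial). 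So what you actually obtain is that $G=\mathrm{ev}_\varphi(c(\overline{\Gamma}))$ satisfies $G^{m}$ infinite cyclic, i.e.\ $G$ has rank one and contains elements of absolute value $\neq 1$---not the claimed equality with the Tate lattice. Second, and more seriously, rank one is not the statement: ``isomorphic to $\Z$ and generated by a unique $t$ with $|t|<1$'' requires $G$ to be torsion free, and the torsion of $\K^{*}$ is $\Fq^{*}$; your argument remains compatible with $G\cong\mu_d\times t^{\Z}$ for some $d\mid q-1$, since passing to $m$-th powers destroys exactly this information. Showing that classes orthogonal to $\varphi$ go to $1$ (not merely to units or roots of unity) is where the cited proofs do their real work, and it is missing here.

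Your fallback verification of rank one is also flawed: knowing that $\ord_\pi\circ\mathrm{ev}_\varphi\circ c$ has kernel of rank $g(\Gamma)-1$ does not bound the rank of $G$, because the kernel of $\ord_\pi$ on $\K^{*}$ is $\O^{*}=\Fq^{*}\times(1+\pi\O)$, which contains free abelian subgroups of arbitrary finite rank; that rank-$(g(\Gamma)-1)$ kernel could a priori map onto an infinite, non-torsion group of valuation-zero units. (Also, the non-degeneracy you invoke ``because $c(\overline{\Gamma})$ is a lattice'' is essentially the positive-definiteness theorem of Gekeler--Reversat, i.e.\ the main input of the original proof, and should be cited as such.) To repair the argument along your lines you would need either to make the torus-level identification exact---identify the character group of the torus uniformizing $E_\varphi$ with the saturated line $\Z\varphi\subset\overline{\Gamma}$ and prove surjectivity on lattices---or, closer to the original proof, to run a Hecke-idempotent argument on the pairing itself: choose $t$ in the Hecke algebra acting as a nonzero integer on $\varphi$ and as $0$ on the orthogonal complement, deduce $c_\varphi(\alpha)=1$ for every $\alpha$ in the saturated complement $\varphi^{\perp}$, so that $G$ is a quotient of $\overline{\Gamma}/\varphi^{\perp}\cong\Z$, and conclude with $|c_\varphi(\varphi)|\neq1$ that $G\cong\Z$ with a unique generator of absolute value $<1$.
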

Let $\Lambda=\{c_\varphi(\gamma), \gamma \in \Gamma\}=t^\Z$. We have the following commutative diagram
 \begin{equation}\label{diagc}
\xymatrix{
1 \ar[r] & \ \overline{\Gamma}  \ \ar[d]^{c_\varphi(.)} \ar[r]^{\overline{c} \qquad }& \Hom(\overline{\Gamma}, \C^{*}) \ar[d]^{\mathrm{ev}_\varphi} \ar[r] & J_\Gamma(\C) \ar[d]^{} \ar[r]& 0 \\
1 \ar[r] & \Lambda \ \ar[r] &  \C^{*}\ar[r] & \C^{*}/\Lambda \ar[r]& 0
}
\end{equation}
\begin{def }
The modular parametrization (also called Weil uniformization) is defined as
$$
\begin{array}{lccl}
\Phi \colon &\overline{M}_\Gamma(\C)& \longrightarrow &\C^{*}/\Lambda \simeq E_{\varphi}(\C)\\
&z & \longmapsto & u_\varphi(z),
\end{array}
$$
where $E_\varphi$ is called the Weil curve associated to the harmonic cochain $\varphi$. We note that $E_\varphi$ is a strong Weil curve in the sense that $\Phi $ can not be factorized through another Weil uniformization $\Phi'\colon \overline{M}_\Gamma (\C)\overset{\Phi'}{\rightarrow} E'(\C) \rightarrow E_\varphi(\C)$, where $E'$ is a Weil curve.
\end{def }

We set 
$\mu=\inf \{\langle \varphi, \psi \rangle>0, \ \psi \in \Hzo\}=\inf\{\log_q|c_\varphi(\gamma)|>0, \ \gamma \in \overline{\Gamma}\}$. Gekeler  has proved that $\mu=v_\infty(t)=-v_\infty(j_E)$ where $j_E$ is the $j$-invariant of $E_\varphi$ (\cite[Theorem~3.2, Corollary~3.19]{gekeler1995analytical}). Let us recall also his formula for the degree of the modular parametrization.

\begin{theo}[{\citep[Proposition 3.8]{gekeler1995analytical}}]\label{deg modular}
The degree of modular parametrization $\Phi$ is given by 
$$
\deg(\Phi)=\dfrac{\langle \varphi, \varphi \rangle}{\mu}.
$$
\end{theo}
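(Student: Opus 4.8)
The plan is to deduce the formula from the elementary identity $\Phi_{*}\circ\Phi^{*}=[\deg\Phi]$, valid on $\mathrm{Pic}^{0}$ of the target for every nonconstant morphism of smooth projective curves; here $\Phi_{*}\colon J_\Gamma\to E_\varphi$ denotes the Albanese (push‑forward) map and $\Phi^{*}\colon E_\varphi\to J_\Gamma$ the Picard (pull‑back) map attached to $\Phi$, and $\mathrm{Pic}^{0}(E_\varphi)=E_\varphi$ since $E_\varphi$ is an elliptic curve. So it suffices to show that $\Phi_{*}\circ\Phi^{*}$ is multiplication by $\langle\varphi,\varphi\rangle/\mu$ on $E_\varphi$.

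First I would identify $\Phi_{*}$. By construction $\Phi$ is the composite of an Abel–Jacobi embedding $\overline{M}_\Gamma\hookrightarrow J_\Gamma$ (based at the cusp $\infty$) with the map $J_\Gamma(\C)=\Hom(\overline{\Gamma},\C^{*})/c(\overline{\Gamma})\to\C^{*}/\Lambda\simeq E_\varphi(\C)$ deduced from $\mathrm{ev}_\varphi$ in the diagram \eqref{diagc} (one uses \citep[Proposition~9.5.1]{gekelerrevjac96}, that $\theta(\omega,\eta,\cdot)$ has divisor $(\omega)-(\eta)$ with multiplier $c(\omega,\eta,\cdot)$, Theorem~\ref{prétheta}(4), and the normalization $u_\varphi(\infty)=1$), and this second map is precisely $\Phi_{*}$. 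Both $J_\Gamma$ and $E_\varphi$ are given by their rigid‑analytic (Mumford) uniformizations, with character lattices $\overline{\Gamma}$ and $\Z$ and "period" lattices $c(\overline{\Gamma})\subset\Hom(\overline{\Gamma},\C^{*})$ and $\Lambda=t^{\Z}\subset\C^{*}$. On character lattices $\Phi_{*}$ is the map $\Z\to\overline{\Gamma}$, $1\mapsto\varphi$ (because $\mathrm{ev}_\varphi(h)=h(\varphi)$), and on period lattices it is $c(\overline{\Gamma})\to t^{\Z}$, $c_\alpha(\cdot)\mapsto c_\alpha(\varphi)=c_\varphi(\alpha)$; identifying $t^{\Z}$ with $\Z$ via $v_\infty(\cdot)/\mu$ — recall $\mu=v_\infty(t)$ — the latter becomes $\alpha\mapsto v_\infty(c_\varphi(\alpha))/\mu$.

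Next, $\Phi^{*}=(\Phi_{*})^{\vee}$. Using the canonical principal polarizations of $J_\Gamma$ and $E_\varphi$ — for $J_\Gamma$ this is, in the Mumford picture, the one attached to the symmetric bilinear pairing $(\alpha,\beta)\mapsto c_\alpha(\beta)$ of Theorem~\ref{prétheta}(5) — passage to the dual interchanges the character‑lattice map and the period‑lattice map of a morphism of such uniformized abelian varieties. Hence $\Phi^{*}$ acts on character lattices by $\overline{\Gamma}\to\Z$, $\alpha\mapsto v_\infty(c_\varphi(\alpha))/\mu$. Composing, $\Phi_{*}\circ\Phi^{*}$ acts on the character lattice $\Z$ of $E_\varphi$ by $1\mapsto\varphi\mapsto v_\infty(c_\varphi(\varphi))/\mu$, so it is multiplication by $v_\infty(c_\varphi(\varphi))/\mu$ on $E_\varphi$. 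Finally, the comparison between the Petersson inner product on $\Hzo$ and the multiplier pairing $c$ (the identity underlying the equality $\mu=\inf\{\langle\varphi,\psi\rangle>0\}=\inf\{\log_q|c_\varphi(\gamma)|>0\}$ recalled before the statement) gives $\langle\varphi,\varphi\rangle=\bigl|v_\infty(c_\varphi(\varphi))\bigr|$; since $\deg\Phi>0$ this yields $\deg\Phi=\langle\varphi,\varphi\rangle/\mu$, which is an integer because $\langle\varphi,\varphi\rangle$ lies in the subgroup $\{\langle\varphi,\psi\rangle:\psi\in\Hzo\}=\mu\Z$ of $\R$.

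The main obstacle is the bookkeeping of the two middle paragraphs: one must know that the analytic uniformization $\Hom(\overline{\Gamma},\C^{*})/c(\overline{\Gamma})$ realizes $J_\Gamma$ together with its canonical principal polarization (so that the polarizing pairing really is $(\alpha,\beta)\mapsto c_\alpha(\beta)$, up to sign), that the maps $\Phi_{*},\Phi^{*}$ read off from \eqref{diagc} are genuinely the Albanese and Picard maps of $\Phi$, that $\Phi^{*}=(\Phi_{*})^{\vee}$ and that duality interchanges character and period maps in the Mumford description, and finally the precise form of the Petersson‑versus‑$c$ comparison. All of this is part of the Gekeler–Reversat uniformization package (\citep[Section~9]{gekelerrevjac96}), so in the present survey it may be invoked; granting it, the computation above is short.
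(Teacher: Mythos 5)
The paper does not prove this statement at all: it is quoted directly from Gekeler \citep[Proposition~3.8]{gekeler1995analytical}, so there is no internal proof to compare with. Your argument is, in substance, a correct reconstruction of the standard (and essentially Gekeler's) proof: the push--pull identity $\Phi_*\circ\Phi^*=[\deg\Phi]$ on $E_\varphi$, the description of $\Phi_*$ and $\Phi^*$ through the rigid-analytic uniformizations with character lattices $\Z$ and $\overline{\Gamma}$, the fact that duality swaps character-lattice and period-lattice maps, and the evaluation $1\mapsto\varphi\mapsto v_\infty(c_\varphi(\varphi))/\mu$; your contravariance bookkeeping and the identification of $\Phi_*$ with the map induced by $\mathrm{ev}_\varphi$ (using $u_\varphi(\infty)=1$ so no translation is needed) are correct. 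Two points carry real weight and must be cited rather than waved at: (i) that the Gekeler--Reversat uniformization $\Hom(\overline{\Gamma},\C^*)/c(\overline{\Gamma})\simeq J_\Gamma$ is an isomorphism of \emph{polarized} abelian varieties, the canonical principal polarization corresponding (up to sign) to the symmetric pairing $(\alpha,\beta)\mapsto c_\alpha(\beta)$ of Theorem~\ref{prétheta}(5) --- this is \citep[Section~7]{gekelerrevjac96} and is not a formal consequence of the exact sequence \eqref{diagc}; and (ii) the exact normalization $\log_q|c_\alpha(\beta)|=\langle j(\alpha),j(\beta)\rangle$, which gives $\langle\varphi,\varphi\rangle=|v_\infty(c_\varphi(\varphi))|$. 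The latter cannot be extracted from the equality of the two infima defining $\mu$ that is recalled before the theorem (equality of infima of two subgroups of $\R$ does not identify the pairings); it is the identity of \citep[5.7]{gekelerrevjac96}, and with the Petersson normalization \eqref{Peter} used here it matches. Granting these two inputs, which is reasonable in the survey context you acknowledge, your proof is complete and follows the same route as the cited source rather than an alternative one.
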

We want to compute the values taken by the modular parametrization $\Phi$ at the cusps of $\overline{M}_\Gamma$. By the analogue of the Manin-Drinfeld theorem, proved in this setting by Gekeler \citep{gekeler2000note}, the subgroup of the Jacobian of $\overline{M}_\Gamma$ generated by the cusps is finite. As a consequence, the image of the cusps by $\Phi$ are torsion points of the elliptic curve. However Gekeler's statement provides no explicit bound for the size of the cuspidal subgroup in general. For our purpose, we prove an explicit bound on the order of such torsion points: the bound depends on the elliptic curve and is proved using Hecke operators and the same proof principle as Manin-Drinfeld.
\begin{theo}\label{ptetors}
Let $s \in \Gamma \backslash \Pj^1(K)$ be a cusp of $M_\Gamma$. Then $\Phi(s)$ is a torsion point of~$E$ and its order divides $\card(\overline{E}(\F_\mathfrak{p}))$ for all $\mathfrak{p}=(P)$ with $P\equiv 1 \bmod n$.
\end{theo}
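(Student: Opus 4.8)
The plan is to mimic the classical Manin--Drinfeld argument, exploiting the fact that the Hecke eigenvalues of $\varphi$ at primes $\p=(P)$ with $P\equiv 1 \bmod n$ are as explicit as possible. First I would translate the problem into the language of theta functions and multipliers. By the definition of $\Phi$, we have $\Phi(s)=u_\varphi(s) \bmod \Lambda$, and by Theorem~\ref{Prodcusp} this value is given by the convergent product $\prod_{\gamma \in \widetilde{\Gamma}}\frac{s-\gamma\omega}{s-\gamma\varphi\omega}$ (here $\varphi$ is viewed inside $\overline{\Gamma}$ via the isomorphism $j$ of Theorem~\ref{isomj}). Since the cuspidal subgroup of $J_\Gamma$ is finite by Gekeler's analogue of Manin--Drinfeld, $\Phi(s)$ is already known to be torsion; the point is to pin down an integer annihilating it. The strategy is: apply the operator $T_\p$ to $u_\varphi(s)$ in two different ways, one using the eigenform property of $\varphi$, the other using the fact that $T_\p$ permutes the cusps when $P\equiv 1 \bmod n$, and compare.

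Concretely, I would proceed as follows. Step one: by \eqref{hecke-fonctiontheta}, $T_\p u_\varphi(s)=\prod_{ad=P,\,a\text{ monic},\,(a,n)=1,\,\deg d<\deg b} u_\varphi\!\left(\frac{as+b}{d}\right)$. When $P$ is prime and $P\equiv 1\bmod n$, the matrices $\begin{pmatrix}a&b\\0&d\end{pmatrix}$ occurring here send the cusp $s\in\Pj^1(K)$ to cusps that are all $\Gamma_0(n)$-equivalent to $s$ — this is exactly where the congruence condition $P\equiv 1\bmod n$ enters, since it forces $\begin{pmatrix}P&0\\0&1\end{pmatrix}$ and its conjugates to lie in $\Gamma_0(n)$ up to the relevant coset data, so that $T_\p$ fixes the class of $s$ in $\Gamma\backslash\Pj^1(K)$. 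Hence $T_\p u_\varphi(s) \equiv u_\varphi(s)^{\deg\p+1} \bmod \Lambda$, where $\deg\p+1 = q^{\deg\p}+1$ is the number of terms in the Hecke sum (this uses that $\Phi$ is $\Gamma$-invariant, so all the $u_\varphi$ of equivalent cusps agree modulo $\Lambda$). Step two: on the other hand, since $\varphi$ is a Hecke eigenform with $T_\p(\varphi)=q^{\deg\p}c(\varphi,\p)\varphi$ and $q^{\deg\p}c(\varphi,\p)=\lambda_\p$, the compatibility of $\Phi$ (equivalently of $\mathrm{ev}_\varphi$ and the projection in diagram~\eqref{diagc}) with Hecke operators gives $T_\p u_\varphi(s) \equiv u_\varphi(s)^{\lambda_\p} \bmod \Lambda$. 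Comparing the two expressions yields $u_\varphi(s)^{\,q^{\deg\p}+1-\lambda_\p} \equiv 1 \bmod \Lambda$, i.e. $\Phi(s)$ has order dividing $q^{\deg\p}+1-\lambda_\p = \card(\overline{E}(\F_\p))$ (recall $\p$ is a prime of good reduction since $P\equiv 1\bmod n$ forces $(P,n)=1$).

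Step three would be to check the two congruences rigorously. For step two I would invoke the fact, recorded after diagram~\eqref{diagc}, that $\overline{c}$ and the projection onto $J_\Gamma(\C)$ commute with Hecke operators, together with \eqref{hecke-fonctiontheta} identifying the Hecke action on $\C^*/\Lambda$; applying $\mathrm{ev}_\varphi$ and using that $\varphi$ is an eigenform with the stated eigenvalue converts the additive eigenvalue relation into the multiplicative one modulo $\Lambda$. The genuinely delicate point — and the one I expect to be the main obstacle — is step one: showing that the $q^{\deg\p}+1$ cusps $\frac{as+b}{d}$ appearing in the Hecke sum are all $\Gamma_0(n)$-equivalent to $s$ when $P\equiv 1\bmod n$. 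This is a concrete computation in $\Pj^1(K)$ with the upper-triangular coset representatives of $T_\p$, and one must verify it uniformly over all cusp classes $s$, taking care of the case $s=0$ versus generic $s$, and keeping track of the monicity and coprimality constraints on $a,b,d$. Once that orbit statement is established, the theorem follows by the comparison above; the assertion that $\Phi(s)$ is torsion is then automatic (and also follows a posteriori from the divisibility, applied to any single admissible $\p$, since such primes exist by Dirichlet's theorem for $A=\Fq[T]$).
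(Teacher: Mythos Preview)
Your proposal is correct and follows essentially the same approach as the paper: apply $T_\p$ for a prime $P\equiv 1\bmod n$, use the eigenvalue relation on one side and the $\Gamma$-equivalence of the $q^{\deg\p}+1$ Hecke-translated cusps on the other, then compare. The paper carries this out slightly more concretely (it obtains $u_\varphi(z)^{\lambda_\p}=T_\p u_\varphi(z)$ as an exact identity rather than merely modulo $\Lambda$, and for the cusp-equivalence step it cites the analogous computation in Cremona's book); note also your slip ``$\deg\p+1=q^{\deg\p}+1$'' for the number of Hecke terms, which should simply read $q^{\deg\p}+1$.
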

\begin{proof}
Let $z \in \overline{M}_\Gamma(\C)$ and let $\mathfrak{p}=(P)$  be a prime ideal of $A$ such that $P\equiv 1 \bmod n$. By the analogue of Dirichlet's theorem (\citep[Theorem~4.7]{Rosennumberthff}), such a polynomial $P$ exists. We have
$$
T_\mathfrak{p}u_\varphi(z)=u_\varphi(Pz) \underset{\deg(j)<\deg(P)}{\Prod_{j \in A}}u_\varphi \left(\dfrac{z+j}{P} \right).
$$
Since $T_\mathfrak{p}$, as any other Hecke operator, commutes with the map 
$f \in \Hom(\overline{\Gamma}, \C) \mapsto f(\varphi) \in \C^*$, we have by \eqref{hecke-fonctiontheta},
\begin{equation}\label{Heckell}
u_\varphi(z)^{\lambda_\mathfrak{p}}=u_\varphi(Pz)\underset{\deg(j)<\deg(P)}{\Prod_{j \in A}} u_\varphi \left(\dfrac{z+j}{P}\right).
\end{equation}
Let $s \in \Gamma \backslash \Pj^1(K)$ be a cusp. We let $\mathcal{S}_P=\{j \in A, \ \deg(j)<\deg(P)\}$. Since $P \equiv 1 \bmod n$ and by arguments similar to  \citep[Proposition 2.2.3]{cremona97algo}, there exist
$Q_P$ and $(Q_j)_{j \in \mathcal{S_P}} \subset \Gamma$ such that $Q_P s=Ps$ and $Q_j s=\dfrac{s+j}{P}$ for all 
$j \in \mathcal{S}_P$. We take $z=s$ in \eqref{Heckell}, divide by $u_\varphi(s)^{q^{\deg(P)+1}}$ and obtain
\begin{align*}
u_\varphi(s)^{\lambda_{\mathfrak{p}}-(q^{\deg(P)}+1)} &= \dfrac{u_\varphi(Ps)}{u_\varphi(s)}\underset{\deg(j)<\deg(P)}{\Prod_{j \in A}} \dfrac{u_\varphi \left(\frac{s+j}{P}\right)}{u_\varphi(s)}, \\
&=\dfrac{u_\varphi(Q_P s)}{u_\varphi(s)}\underset{\deg(j)<\deg(P)}{\Prod_{j \in A}} \dfrac{u_\varphi (Q_j s)}{u_\varphi(s)} \in \Lambda.
\end{align*}
So $\Phi(s)$ is a torsion point of $E(\C)$ whose order divides $\card(\overline{E}(\F_{\mathfrak{p}}))$.
\end{proof}
\begin{rem}
In specific cases, explicit bounds for the size of the cuspidal subgroup of the Jacobian are known, for example when $n$ is irreducible by P\'al (\citep{palontheeis}) or $\deg(n)=3$ by Papikian and Wei (\citep{papikian2016eisenstein}). These bounds, which in contrast with Theorem~\ref{ptetors} depend only on $n$, also are upper bounds for the order of torsion points on the elliptic curve coming from evaluating $\Phi$ at cusps. However Theorem~\ref{ptetors} requires no assumption on $n$.
\end{rem}
\section{Examples}\label{Appl}
\subsection{Example in \texorpdfstring{$\F_2(T)$}{F2(T)} with \texorpdfstring{$n=T^3$}{n=T3}}\label{examp1}
We consider the elliptic curve $E/\F_2(T)$ defined by 
$$ y^2+Txy=x^3+T^2.$$
Its conductor is $\mathfrak{n}=T^3 \infty$. The elliptic curve $E$ is isomorphic over $\F_2(T)$ to the strong Weil curve given
in  \citep[Theorem 2.1 (a)]{schweizerstrong11}. We set $n=T^3$ and $\Gamma=\Gamma_0(n)$. The genus of $\overline{M}_\Gamma$ 
is $1$, thus the abelian group $\Hzo$ has rank one. Let $\varphi_E \in \Hzo$ be the harmonic cochain attached to $E$ as described in Section~\ref{Modpar}. We 
can identify $\varphi_E$ with its Fourier expansion: indeed since $\varphi_E$ has finite support modulo~$\Gamma$, it is sufficient 
to know its values on the finite number of edges in $(\Gamma \backslash \T)^0$. 
Furthermore we can compute the inverse image of $\varphi_E$ by the isomorphism $j$ of Theorem~\ref{isomj}. For this purpose, we compute 
the inverse image of a $\Z$-basis 
of $\Hzo$ with the maximal subtree method in $\Gamma \backslash \T$ as in \citep[proof of Lemma 3.3.3]{gekelerrevjac96}. The element $\alpha \in \overline{\Gamma}$ such that $j(\alpha)=\varphi_E$ 
that we obtain is then
$$
\alpha =\begin{pmatrix}
T^2+T+1&1\\T^3&1+T
\end{pmatrix}.
$$ 
Note that $\alpha $ generates $\overline{\Gamma}$ since $\varphi_E$ generates $\Hzo$, hence 
$\Lambda=\{c_\alpha(\gamma), \gamma \in \overline{\Gamma}\}=\langle c_\alpha(\alpha)\rangle$. In order to determine the lattice 
$\Lambda$, we just need  to compute $c_\alpha(\alpha)$. The coefficient $c_\alpha(\alpha)$ is given by (see \eqref{coeffmul})
\begin{equation}\label{calpha1}
c_\alpha(\alpha)=\Prod_{\gamma \in \widetilde{\Gamma}}\dfrac{\alpha \infty-\gamma \omega}{\alpha \infty-\gamma \alpha \omega},
\end{equation}
and this product is independent of $\omega \in \Omega$. We choose $\omega=T^{-2}\rho$ with $\rho \in \C^{*}$ satisfying 
$\rho^2+\rho+1=0$.
We obtain a suitable approximation of $c_\alpha(\alpha)$ by restricting the product~\eqref{calpha1} to the set 
$$
\left\{\gamma \in \widetilde{\Gamma},\left|\dfrac{\alpha \infty-\gamma \omega}{\alpha \infty-\gamma \alpha \omega}-1\right|\geq \frac{1}{16}\right\}.
$$ 
This set is then determined by similar arguments as those in the proof of Proposition~\ref{appox1pts}. 
We obtain
$c_\alpha(\alpha)=\pi^{-4}+\delta_{c_\alpha(\alpha)}$ where $\delta_{c_\alpha(\alpha)} \in \K^*$ is such that 
$|\delta_{c_\alpha(\alpha)}|<1$. We deduce that a generator of $\Lambda$ is  $t=c_\alpha(\alpha)^{-1}=\pi^4+\delta_{t_\alpha}$ with 
$|\delta_{t_\alpha}|<2^{-8}$.

The modular curve $\overline{M}_\Gamma$ has four cusps: $\infty, 0, \frac{1}{T}, \frac{1}{T^2}$. By Theorem~\ref{deg modular}, the degree of the modular 
parametrization is $1$ and $\Phi$ is therefore an isomorphism. For a cusp $c\in \{\infty, 0, \frac{1}{T}, \frac{1}{T^2}\}$, we want to evaluate $u_\alpha(c)$ modulo $\Lambda$. Using Theorem~\ref{ptetors}, we compute $\card(\overline{E}(\F_\mathfrak{p}))$ for all primes  $\mathfrak{p}=(P)$ with $\deg(P)\leq 15$ and obtain that the image of $c$ is a torsion point in $E$ of order dividing $16$. Note that in this example, the 
bound given by Papikan-Wei in \citep{papikian2016eisenstein} is $4$, which is better than ours. 
\begin{prop}\label{appox1pts}
We have the following approximations:
\begin{align*}
& u_\alpha(0)=\pi+\upsilon, &&  \mbox{ with } |\upsilon|<2^{-5}, \\ 
&u_\alpha\left(\frac{1}{T}\right)=\pi^{-1}+\delta_{1/T},  && \mbox{ with }|\delta_{1/T}|<2^{-3}, \\
&u_\alpha\left(\frac{1}{T^2} \right)=\pi^2+\delta_{1/T^2}, && \mbox{ with } |\delta_{1/T^2}|<2^{-6}.
\end{align*}
In particular, the points $\Phi(0)$ and $\Phi(\frac{1}{T}) $ have order $4$ in $E(\K)$, and the point $\Phi(\frac{1}{T^2})$ has  order 
$2$ in $E(\K)$.
\end{prop}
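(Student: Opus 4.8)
The plan is to evaluate $u_\alpha$ at each of the cusps $0,\,1/T,\,1/T^2$ by means of Theorem~\ref{Prodcusp}, which gives, for $s \in \Pj^1(K)-\{\infty\}$,
$$
u_\alpha(s) = \Prod_{\gamma \in \widetilde{\Gamma}} \dfrac{s - \gamma \omega}{s - \gamma \alpha\omega},
$$
a convergent product independent of $\omega \in \Omega$, and then to read off the order of $\Phi(s) = u_\alpha(s) \bmod \Lambda$ in $E(\K) \simeq \K^{*}/t^{\Z}$ from its $\infty$-adic valuation together with the torsion bound of Theorem~\ref{ptetors}.

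For the numerical evaluation, fix the base point $\omega = T^{-2}\rho$ used in Section~\ref{examp1} (so $|\omega| = |\omega|_i = q^{-2}$, since $\rho \in \F_4 - \F_2$ and hence $\omega \in \Omega$). Each factor of the product can be rewritten, using \eqref{gammaomg}, as
$$
\dfrac{s - \gamma\omega}{s - \gamma\alpha\omega} = 1 + \dfrac{(\alpha\omega - \omega)\det(\gamma)}{(s - \gamma\alpha\omega)(c\omega + d)(c\alpha\omega + d)},
$$
and combining this with the lower bound $|s - \gamma\alpha\omega| \geq \kappa_{s,\alpha\omega}/|c\alpha\omega + d|$ recalled just before \eqref{majorationFg} yields $\left|\dfrac{s - \gamma\omega}{s - \gamma\alpha\omega} - 1\right| \leq C_{\omega,\alpha,s}/|c\omega + d|$, with $C_{\omega,\alpha,s}$ independent of $\gamma$. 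Since for $\gamma \in \Gamma_0(T^3)$ the lower-left entry $c$ is zero or a multiple of $T^3$, the quantity $|c\omega + d|$ is bounded below on each ``level'', so only finitely many classes $\gamma \in \widetilde{\Gamma}$ make a factor differ from $1$ by at least a prescribed threshold. I would enumerate these finitely many $\gamma$ for the precision needed in each case, compute the corresponding finite subproduct in $\K$, and bound the infinite tail using the geometric-type estimate underlying Corollary~\ref{corocv4} together with the uniform bound $|u_\alpha| < C$ near $s$ coming from \eqref{ConstC}; this is exactly the computation referred to in Section~\ref{examp1} by ``similar arguments as those in the proof of Proposition~\ref{appox1pts}''. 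The output is the three stated approximations, and since $v_\infty(\pi)=1$ while each error term has strictly larger valuation, it follows that $v_\infty(u_\alpha(0)) = 1$, $v_\infty(u_\alpha(1/T)) = -1$ and $v_\infty(u_\alpha(1/T^2)) = 2$.

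For the orders, recall from Section~\ref{examp1} that $\Lambda = t^{\Z}$ with $v_\infty(t) = 4$. By Theorem~\ref{ptetors}, applied to the primes $P \equiv 1 \bmod T^3$ of degree at most $15$, each $\Phi(s)$ is torsion of order dividing $16$, so $u_\alpha(s)^{16} = t^{m}$ for some $m \in \Z$; comparing $v_\infty$ forces $m = 4$ for $s=0$, $m=-4$ for $s=1/T$, and $m=8$ for $s=1/T^2$. Hence $\bigl(u_\alpha(0)^4 t^{-1}\bigr)^4 = \bigl(u_\alpha(1/T)^4 t\bigr)^4 = \bigl(u_\alpha(1/T^2)^2 t^{-1}\bigr)^8 = 1$, and since $\K$ has characteristic $2$ we have $\mu_{16}(\K) = \{1\}$, whence $u_\alpha(0)^4 = t$, $u_\alpha(1/T)^4 = t^{-1}$ and $u_\alpha(1/T^2)^2 = t$. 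No smaller power of $u_\alpha(s)$ can lie in $t^{\Z}$, because the valuations $1,2,3$ (resp. $-1,-2,-3$; resp. $1$) are not multiples of $v_\infty(t)=4$; therefore $\Phi(0)$ and $\Phi(1/T)$ have order exactly $4$ and $\Phi(1/T^2)$ has order exactly $2$ in $E(\K)$.

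The main obstacle is the effective part of the second paragraph: converting the qualitative decay $\left|\,\cdot - 1\right| \leq C_{\omega,\alpha,s}/|c\omega+d|$ into a certified finite list of contributing $\gamma \in \widetilde{\Gamma}$, and into a rigorous (not merely numerical) bound on the tail of the product, requires tracking the constants $\kappa_{s,\alpha\omega}$ and $C_{\omega,\alpha,s}$ explicitly for this particular $\alpha$ and each cusp $s$, and verifying that no relevant $\gamma$ has been overlooked. Once the three approximations are certified to the stated precision, the determination of the orders is immediate from the valuation argument above, so the whole difficulty is concentrated in this finite but delicate verification.
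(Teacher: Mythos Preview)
Your outline follows essentially the same route as the paper: invoke Theorem~\ref{Prodcusp}, fix the base point $\omega=T^{-2}\rho$, bound each factor $G_{\gamma,\omega}(s)=\dfrac{s-\gamma\omega}{s-\gamma\alpha\omega}$ away from $1$ via \eqref{gammaomg}, isolate a finite set of contributing $\gamma$, and compute the resulting subproduct. What the paper adds, and what you correctly flag as ``the main obstacle'', is that it actually carries out the constant-tracking for $s=0$: it proves the uniform bound $|a\alpha\omega+b|\geq 2^{-1}$, derives explicit thresholds on $|a|,|b|,|c|,|d|$ that define the finite set $\widetilde{\Gamma}_\varepsilon$, shows the crucial inequality $\bigl|\prod_{\gamma\in S}G_{\gamma,\omega}(0)\bigr|\leq 1$ for all $S\subset\widetilde{\Gamma}$ (so the tail bound is clean), and then reports the Pari-GP value of the finite product. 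Your sketch does not supply these explicit constants, so as written it is a correct plan rather than a complete proof.

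Your argument for the orders is in fact cleaner than the paper's. The paper writes $u_\alpha(0)^4=t+\upsilon^4$ and concludes, which is a little elliptical since $t$ itself is only known approximately; your route---use Theorem~\ref{ptetors} to get $u_\alpha(s)^{16}\in t^{\Z}$, match valuations, and then invoke $\mu_{16}(\K)=\{1\}$ in characteristic $2$ to pass from a $4$th- or $8$th-power relation to an exact equality---makes the deduction of the \emph{exact} order fully rigorous from the approximate data. One small slip: for $s=1/T^2$ the valuation to check at the last step is $v_\infty\bigl(u_\alpha(1/T^2)\bigr)=2$, not $1$; this is still not a multiple of $4$, so the conclusion stands.
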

\begin{proof}
We only give details for the computation of $u_\alpha$ at $s=0$. Recall that by Theorem~\ref{Prodcusp}, the value $u_\alpha(0)$ is given by 
$$
u_\alpha(0)=\Prod_{\gamma \in \widetilde{\Gamma}}\dfrac{0-\gamma \omega}{0-\gamma \alpha \omega}.
$$
As above, we choose $\omega=T^{-2}\rho$ with $\rho^2+\rho+1=0$. We then observe that $|\omega|=|\omega|_i=2^{-2}$, 
$|\alpha \omega|=2^{-1}$ and $|\alpha \omega|_i=2^{-4}$. 
First let $\gamma=\begin{pmatrix}
a&b\\c&d
\end{pmatrix}$ be an element of $\widetilde{\Gamma}$. We have 
\begin{align*}
\left|\dfrac{-\gamma \omega}{-\gamma \alpha \omega}\right|&=\left|1+\dfrac{\gamma \alpha  \omega-\gamma \omega}{-\gamma\alpha \omega}\right| 
 \leq \max \left(1,\left|\dfrac{\gamma \alpha \omega -  \gamma \omega}{- \gamma \alpha \omega}\right| \right),
\end{align*}
and by \eqref{gammaomg},
\begin{equation}\label{eqgamma}
\left|\dfrac{\gamma \alpha \omega -  \gamma \omega}{- \gamma \alpha \omega}\right|=\dfrac{|\alpha \omega-\omega|}{|-\gamma \alpha \omega||c \omega+d||c \alpha \omega+d|}=\dfrac{2^{-1}}{|a \alpha \omega+b||c \omega+d|}.
\end{equation}
But we have
\begin{align*}
|a \alpha \omega+b|&=\dfrac{|(a(1+T+T^2)+bT^3)\omega+b(1+T)+a |}{|T^3 \omega +1+T|} \\
& \geq 2^{-1}\left|(a(1+T+T^2)+bT^3)\omega+b(1+T)+a \right| \\
&\geq 2^{-1}.
\end{align*}
Hence if $c \neq 0$
\begin{align*}
\left|\dfrac{\gamma \alpha \omega -  \gamma \omega}{- \gamma \alpha \omega}\right| &\leq \dfrac{1}{|c\omega+d |} \leq \dfrac{2^2}{|c|}
\leq 2^{-1}.
\end{align*}
If $c=0$, then $a=d=1$ and $|c \omega+d|=1$ and we have
$$
\left| \dfrac{\gamma \alpha \omega- \gamma \omega}{-\gamma \alpha \omega}\right|=\dfrac{2^{-1}}{|\alpha \omega+b|} \leq 1.
$$
For all subsets $S \subset \widetilde{\Gamma}$, we deduce
\begin{equation}\label{majprodzero}
\left|\Prod_{\gamma \in S}\frac{-\gamma \omega}{-\gamma \alpha \omega}\right|\leq 1.
\end{equation}
By this inequality, we see that to compute $u_\alpha(0)$ at a precision at most $\varepsilon$, it is 
sufficient to compute the product 
$$
\underset{|G_{\gamma,\omega}(0)-1|\geq \varepsilon}{\Prod_{\gamma \in \widetilde{\Gamma}}}G_{\gamma,\omega}(0),
$$
where $G_{\gamma,\omega}(z)=\dfrac{z-\gamma \omega}{z-\gamma \alpha \omega}$. If $|c|>\frac{2^2}{\varepsilon}$, we have
\begin{align*}
|G_{\gamma,\omega}(0)-1| &= \dfrac{2^{-1}}{|a \alpha \omega+b||c \omega+d|} & \text{(by \eqref{eqgamma})} \\& \leq \dfrac{2^2}{|c|} < \varepsilon.
\end{align*}
If $|c|\leq \frac{2^2}{\varepsilon}$ and if  $|d|>\frac{2}{\varepsilon} $, we have $|G_{\gamma,\omega}(0)-1| <\varepsilon $. Now for 
the pairs $(c,d)$ not satisfying the condition ($|c|> \frac{2^2}{\varepsilon}$ or $|d|>\frac{2}{\varepsilon}$) then,  if $c\neq 0$ we 
have $|c \omega+d|\geq |c||\omega|_i\geq 2^{1}$, and then
\begin{align*}
|G_{\gamma,\omega}(0)-1|&\leq \dfrac{2^{-2}}{|a \alpha \omega+b|} < \dfrac{2^2}{|a|} 
 < \varepsilon & \mathrm{if} \ |a|>\frac{2^2}{\varepsilon}.
\end{align*}
And if $c=0$ (so that $a=d=1$), then if $|b|>\frac{2^{-1}}{\varepsilon}$, we obtain
$$
|G_{\gamma ,\omega}(0)-1|=\dfrac{2^{-1}}{| \alpha \omega+b|} \leq \frac{2^{-1}}{|b|} < \varepsilon. 
$$
Note that, except for the case $c=0$, for fixed $a, c,d$ there exists at most one value of $b \in A$ such that 
$
\begin{pmatrix}
a&b\\c&d
\end{pmatrix} \in \Gamma
$. Hence, to approximate $u_\alpha(0)$ at precision at most~$\varepsilon$ we must compute the product 
$$
\Prod_{\gamma \in \widetilde{\Gamma}_\varepsilon}G_{\gamma,\omega}(0),
$$
where
$$
\widetilde{\Gamma}_\varepsilon=\left\{\gamma =\begin{pmatrix}
a&b\\c&d
\end{pmatrix} \in \widetilde{\Gamma},c\neq 0,|c| \leq \frac{2^2}{\varepsilon}, |d| \leq  \frac{2}{\varepsilon}, |a| \leq \frac{2^2}{\varepsilon} \right\} \cup \left\{\gamma \in \widetilde{\Gamma},c=0, |b| \leq \dfrac{1}{2 \varepsilon} \right\}.
$$
The computation of the finite product $\Prod_{\gamma \in \widetilde{\Gamma}_\varepsilon}G_{\gamma,\omega}(0)$ for $\varepsilon = 2^{-5}$ has been done 
with the software Pari-GP \citep{PARI2}. We obtain $u_\alpha(0)= \pi + \upsilon$ with 
$\upsilon \in \K^*$ satisfying $|\upsilon|<2^{-5}$, thus $u_\alpha(0)^4=t+\upsilon^4$. We conclude that $\Phi(0)$ has order $4$.

For $s=\frac{1}{T}$, we obtain $u_\alpha(\frac{1}{T})=\pi^{-1}+\delta_{1/T}$ with $\delta_{1/T}\in \K^*$ satisfying 
$|\delta_{1/T}|<2^{-3}$: we deduce that the point $\Phi(1/T)$ has order $4$ on $E$. Finally, we have 
$u_\alpha \left(\frac{1}{T^2}\right)=\pi^2+\delta_{1/T^2}$ with $|\delta_{1/T^2}| \leq 2^{-6}$, hence the point 
$\Phi\left(\frac{1}{T^2}\right)$ has order $2$.
\end{proof}
\begin{rem}
In this example we can also give an explicit bound on the cardinal of the set~$\widetilde{\Gamma}_\varepsilon$.
The set $S=\{c \in \F_2[T]: c \equiv 0 \bmod T^3,\, \deg(c)\leq 2+ \lfloor \log_2(\frac{1}{\varepsilon})\rfloor\}$ has cardinal $2^{-\lfloor \log_2(\varepsilon) \rfloor}$.
Let $c \in S$. Consider the set $S_c$ of polynomials $d \in \F_2[T]$ such that there exists a matrix  $\begin{pmatrix} a&b\\c&d
\end{pmatrix} \in \widetilde{\Gamma}$: an upper bound on its cardinal is 
$$\card (\{ d \in \F_2[T]: \deg(d)\leq 1-\lfloor\log_2(\varepsilon) \rfloor, \, \pgcd(d,T)=1\})=2^{2-\lfloor \log_2(\varepsilon) \rfloor}.$$
Then for $c \in S$ and $d \in S_c$, the number of all possible polynomials $a$ such that there exists $b\in A$ with $\begin{pmatrix}a&b\\c&d\end{pmatrix} \in \widetilde{\Gamma}$ is bounded by
$$\card( \{a \in \F_2[T], \pgcd(a,T)=1 \ \mathrm{and} \ \deg(a) \leq 2-\log_2(\varepsilon)\})=2^{2-\lfloor \log_2(\varepsilon)\rfloor}.$$
Hence we get $\card(\widetilde{\Gamma}_\varepsilon) \leq 2^{4-3 \lfloor \log_2(\varepsilon)\rfloor}$.
\end{rem}
\subsection{Example in \texorpdfstring{$\F_3(T)$}{F3(T)} with \texorpdfstring{$n=T^3-T^2$}{n=T3-T2}}
We consider the elliptic curve $E/\F_3(T)$ defined by 
$$
E \colon y^2=x^3+T(T+1)x^2+T^2x.
$$
Its conductor is $\infty (T^3-T^2)$. Again the curve $E$ is a strong Weil curve, see \citep[Theorem 2.4 (b)]{schweizerstrong11}.
Let $n=T^3-T^2$ be the finite part of conductor of $E$ and let $\Gamma=\Gamma_0(T^3-T^2)$. Here the genus of the curve 
$\overline{M}_\Gamma$ is 2 hence the abelian group $\Hzo$ has rank two. 
Let $\varphi_E$ be the harmonic cochain associated to $E$. Let $\alpha$ the inverse image of $\varphi_E$ by $j$. By the same method as in the first example, we find
$$
\alpha=\begin{pmatrix}
T^3+2T^2+2T+2&2T+1\\T^4+T^3+T^2&2T^2+2T+1
\end{pmatrix}.
$$
We compute a generator $t \in \K^*$ of the lattice $\Lambda=\{c_\alpha(\gamma), \gamma \in \widetilde{\Gamma}\}=t^\Z$. We obtain the following approximation for $t$:
$$
t=\pi^4+2\pi^5+O(\pi^6).
$$ 
The modular curve $\overline{M}_\Gamma $ has six cusps: $\infty,0,\frac{1}{T}, \frac{1}{T^2}, \frac{1}{T-1}, \frac{1}{T^2-T}$. The 
Tate period of~$E$ is given by (see \citep[Section 2 of Appendix A]{silverman2013advanced}) 
$$t_E=\Sum_{n\geq 1}d(n)\dfrac{1}{j_E^n}   =\pi^4+2\pi^5+\pi^7+2\pi^8 +\delta_{t_E},
$$
where $(d(n))_{n \geq 1}$ are the coefficients of the series for the reciprocal function of $j$, and $\delta_{t_E} \in \K^*$ with 
$|\delta_{t_E}|<3^{-10}$. We deduce that $t_E=t$.
The degree of the modular parametrization is $2$. As before, we compute $\card(\overline{E}(\F_\mathfrak{p}))$ for  prime ideals 
$\mathfrak{p}=(P)$ such that $\deg(P)\leq 10$ and $P \equiv 1 \bmod n$. By Theorem~\ref{ptetors}, we obtain that the order of image of a cusp divides 
$8$. This bound is sharper than the one of Papikian-Wei \citep{papikian2016eisenstein} which is $24$ in this example. 
\begin{prop}
We have the following approximations:
\begin{align*}
&u_\alpha(0)=2\pi^2+2\pi^3+\vartheta_0, & & \mbox{ with } |\vartheta_0|<3^{-3}, \\
&u_\alpha \left( \frac{1}{T}\right)=\pi^{-1}+1+\pi+\vartheta_{1/T}, & & \mbox{ with } |\vartheta_{1/T}|<3^{-1}, \\
&u_\alpha\left(\frac{1}{T^2}\right)=\pi^4+2\pi^5+\vartheta_{1/T^2} & & \mbox{ with } |\vartheta_{1/T^2}|<3^{-5}, \\
&u_\alpha \left(\frac{1}{T-1}\right)= \pi^{-2}+\pi^{-1}+\vartheta_{1/(T-1)}, & &\mbox{ with } |\vartheta_{1/(T-1)}|<1, \\
&u_\alpha\left( \frac{1}{T^2-T} \right)=\pi^3+\vartheta_{1/(T^2-T)}, & & \mbox{ with } |\vartheta_{1/(T^2-T)}|<3^{-5}.
\end{align*}
In particular, we get
\begin{enumerate}
\item the points $\Phi\left(\frac{1}{T}\right)$ and $ \Phi \left( \frac{1}{T^2-T}\right) $ have order $4$ in $E(\K)$,
\item the point $\Phi\left( \frac{1}{T^2} \right)$ has order 1 in $E(\K)$, 
\item  the point $\Phi(0)$ and $ \Phi\left( \frac{1}{T-1}\right)$ has order $2$ on $E(\K)$.
\end{enumerate}
\end{prop}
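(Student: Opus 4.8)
The plan is to follow the same strategy already used in Section~\ref{examp1}: use Theorem~\ref{Prodcusp} to express each value $u_\alpha(s)$ as the infinite product $\Prod_{\gamma \in \widetilde{\Gamma}} \frac{s-\gamma\omega}{s-\gamma\alpha\omega}$, choose a convenient $\omega \in \Omega$ with $|\omega| = |\omega|_i \in q^{\Q-\Z}$, and then truncate this product to a finite, explicitly describable subset $\widetilde{\Gamma}_\varepsilon \subset \widetilde{\Gamma}$ consisting of those $\gamma$ for which the factor $G_{\gamma,\omega}(s) = \frac{s-\gamma\omega}{s-\gamma\alpha\omega}$ is not $\varepsilon$-close to $1$. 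The input data are the matrix $\alpha = \begin{pmatrix} T^3+2T^2+2T+2 & 2T+1 \\ T^4+T^3+T^2 & 2T^2+2T+1 \end{pmatrix}$ and the generator $t = \pi^4 + 2\pi^5 + O(\pi^6)$ of $\Lambda$ computed above, together with Theorem~\ref{ptetors}, which tells us a priori that each $\Phi(s)$ is torsion of order dividing $8$; the latter restricts the possible orders and makes a modest $p$-adic (here $\pi$-adic) precision sufficient to pin them down.

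First I would, for each cusp $s \in \{0, \tfrac{1}{T}, \tfrac{1}{T^2}, \tfrac{1}{T-1}, \tfrac{1}{T^2-T}\}$, record $|\omega|$, $|\omega|_i$, $|\alpha\omega|$, $|\alpha\omega|_i$ for the chosen $\omega$ (as in the first example, a natural choice is $\omega = T^{-2}\rho$ for a suitable $\rho$, adjusting so that $|s - \gamma\alpha\omega|$ stays under control; the exact $\omega$ can differ from cusp to cusp). Then, exactly as in the proof of Proposition~\ref{appox1pts}, I would use the identity $G_{\gamma,\omega}(s) - 1 = \frac{\gamma\alpha\omega - \gamma\omega}{s - \gamma\alpha\omega}$ together with \eqref{gammaomg} and the lower bound $|s - \gamma\alpha\omega| \geq \kappa_{s,\alpha\omega}/|c\alpha\omega+d|$ to show that $|G_{\gamma,\omega}(s)-1| < \varepsilon$ outside a box $\{|a| \leq C_1/\varepsilon,\ |c| \leq C_2/\varepsilon,\ |d| \leq C_3/\varepsilon\}$ (with the $c = 0$ case, where $a = d \in \Fq^*$, handled separately via a bound on $|b|$). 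This reduces each computation to a finite product over $\widetilde{\Gamma}_\varepsilon$, which is then evaluated in Pari-GP \citep{PARI2} at precision $\varepsilon = 3^{-k}$ for $k$ large enough (the stated error terms $|\vartheta_\bullet| < 3^{-3}, 3^{-1}, 3^{-5}, 1, 3^{-5}$ indicate the precisions actually needed).

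Having the approximations $u_\alpha(0) = 2\pi^2 + 2\pi^3 + \vartheta_0$, etc., I would read off the order of $\Phi(s)$ in $E(\K) \simeq \K^*/t^\Z$ by computing powers $u_\alpha(s)^m$ for $m \mid 8$ and checking membership in $\Lambda = t^\Z$; since $t = \pi^4 + \dots$ has $v_\infty(t) = 4$, an element $x \in \K^*$ lies in $t^\Z$ iff $v_\infty(x) \in 4\Z$ and $x/t^{v_\infty(x)/4}$ is a principal unit of the required precision, so for instance $u_\alpha(\tfrac{1}{T})^4 = \pi^{-4}(1 + \dots) = t^{-1}(1+\dots) \in \Lambda$ while $u_\alpha(\tfrac{1}{T})^2 \notin \Lambda$ gives order $4$, and $u_\alpha(\tfrac{1}{T^2}) = \pi^4 + 2\pi^5 + \dots$ agrees with $t$ to the computed precision so $\Phi(\tfrac{1}{T^2})$ is trivial. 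The main obstacle is book-keeping: making the constants $C_1, C_2, C_3$ defining $\widetilde{\Gamma}_\varepsilon$ fully explicit for each of the five cusps (the absolute values $|\alpha\omega|, |\alpha\omega|_i$ change with $s$, and for cusps like $\tfrac{1}{T-1}$ and $\tfrac{1}{T^2-T}$ one must also verify that $\omega$ was chosen so that $|s - \gamma\alpha\omega|$ does not degenerate), and ensuring the chosen precision genuinely beats the gap between the candidate orders $1, 2, 4, 8$. Everything else is routine ultrametric estimation and a finite machine computation, so I would present the $s = 0$ case in detail (mirroring Proposition~\ref{appox1pts}) and then indicate that the remaining four cusps are handled identically, quoting the Pari-GP outputs.
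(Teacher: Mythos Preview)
Your approach is essentially the same as the paper's: truncate the product from Theorem~\ref{Prodcusp} to a finite set defined by explicit degree bounds on the entries of $\gamma$, evaluate the truncated product in Pari-GP, and read off the torsion order using the a~priori bound from Theorem~\ref{ptetors}. Two small points of divergence: the paper uses a single $\omega=T^{-2}\rho$ with $\rho^2+1=0$ for all five cusps (your remark that $|\alpha\omega|,|\alpha\omega|_i$ ``change with $s$'' is off --- these depend only on $\alpha$ and $\omega$, not on $s$; it is the lower bound on $|s-\gamma\alpha\omega|$ that depends on $s$), and it works out $s=\tfrac{1}{T}$ in detail rather than $s=0$. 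More substantively, the paper notes that, unlike in Section~\ref{examp1} where $\bigl|\prod_{\gamma\in S}G_{\gamma,\omega}(0)\bigr|\le 1$ held for every subset $S$, here $\prod_{\gamma\in\mathcal{S}_1}G_{\gamma,\omega}(\tfrac{1}{T})$ already has norm $3$, so to hit precision $\varepsilon$ one must truncate to $\mathcal{S}_{\varepsilon/3}$ rather than $\mathcal{S}_\varepsilon$; you should make this adjustment explicit rather than simply ``mirroring'' Proposition~\ref{appox1pts}.
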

\begin{proof}
As before for $s \in \Gamma \backslash \Pj^1(K)$ and $\gamma \in \Gamma$, we let 
$G_{\gamma, \omega}(s)=\dfrac{s-\gamma \omega}{s-\gamma \alpha \omega}$. 
As a system of 
representatives for $\tilde{\Gamma}$, we choose the matrices $\begin{pmatrix}
a&b\\c&d
\end{pmatrix}\in \Gamma$ with monic $a$. For $\varepsilon \leq 1$, let 
$$
\mathcal{S}_\varepsilon = \{\gamma \in \widetilde{\Gamma}, |G_{\gamma,\omega}(s)-1|\geq \varepsilon\}.
$$ 
We have  $\left|\Prod_{\gamma _\in \mathcal{S}_\varepsilon}G_{\gamma,\omega}(s)\right| \leq \left|\Prod_{\gamma \in \mathcal{S}_1}G_{\gamma,\omega}(s)\right|$ and  
$$
\left| \Prod_{\gamma \in \widetilde{\Gamma}}G_{\gamma,\omega}(s)-\Prod_{\gamma \in \mathcal{S}_\varepsilon}G_{\gamma,\omega}(s)\right|< \left|\Prod_{\gamma \in \mathcal{S}_1}G_{\gamma,\omega}(s)\right|\varepsilon.
$$
Indeed if $g_{\gamma,\omega}(s)=G_{\gamma, \omega}(s)-1$, we have 
$$
\left|\Prod_{\gamma \in \widetilde{\Gamma}-\mathcal{S}_\varepsilon}G_{\gamma,\omega}(s)-1\right|=\left|\Prod_{\gamma \in \widetilde{\Gamma}-\mathcal{S}_\varepsilon}(1+g_{\gamma,\omega}(s))-1\right|
$$
and we can easily see that 
$\Prod_{\gamma \in \widetilde{\Gamma}-\mathcal{S}_\varepsilon}(1+g_{\gamma,\omega}(s))=1+\delta_{\omega,\varepsilon}(s)$ 
with ~$|\delta_{\omega,\varepsilon}(s)|<\varepsilon$. 

We give details only for the cusp $s=\frac{1}{T}$. We choose $\omega=T^{-2}\rho$ with $\rho^2+1=0$. Note that 
$|\omega|=|\omega|_i=3^{-2}$, $|\alpha \omega|=3^{-1}$ and $|\alpha \omega|_i= 3^{-4}$. Let $\gamma=\begin{pmatrix}
a&b \\ c&d
\end{pmatrix} \in \Gamma$. We have 
$$|s-\gamma \alpha \omega|=\dfrac{\left|(c-aT)\alpha \omega+ (d-bT) \right|}{|T||c \alpha \omega+d|}.
$$
Moreover we have
$$
|(c-aT)\alpha \omega+d-bT| \geq 3^{-2}.
$$
Hence we obtain by \eqref{gammaomg} that if $|c|>\frac{3^4}{\varepsilon}$
$$
|G_{\gamma, \omega}(s)-1|\leq \dfrac{3^3 |\alpha \omega-\omega|}{|c \omega+d|} \leq \dfrac{3^2}{|c||\omega|_i}\leq \dfrac{3^4}{|c|} <\varepsilon.
$$
Furthermore, if $|c|\leq \frac{3^4}{\varepsilon}$ and
$|d|>\frac{3^2}{\varepsilon}$, we have $|c\omega+d|=|d|$ and we get 
$$
|G_{\gamma(s), \omega}-1|=\dfrac{3^2}{|d|}< \varepsilon.
$$
If $(c,d)$ does not satisfy the condition ($|c|>\frac{3^4}{\varepsilon}$ or $|d|>\frac{3^2}{\varepsilon}$) and if $c \neq 0$, we 
note that when $|a|> \frac{3^5}{\varepsilon}$, we have $|a \alpha \omega+b|>|c \alpha\omega+d||s|$. We then obtain
$$
|s-\gamma \alpha \omega|=\left|s-\dfrac{a\alpha \omega+b}{c \alpha \omega+d}\right|=\dfrac{|a \alpha \omega+b|}{|c \alpha \omega +d|}.
$$
Since $|c \omega+d| \geq |c||\omega|_i \geq 3^1$, we get by \eqref{gammaomg} 
\begin{align*}
|G_{\gamma, \omega}(s)-1| \leq \dfrac{|\alpha \omega- \omega|}{|a \alpha \omega+b||c \omega+d|} \leq \dfrac{3^2}{|a|} < \varepsilon, & &\mathrm{if } \ |a| >  \frac{3^5}{\varepsilon}.
\end{align*}
If $c=0$, we have $a=1$ and $d \in \F_3^{*}$, so we get if $b\neq 0$, $|s-\gamma \alpha \omega|=|s-\frac{\alpha \omega}{d}-\frac{b}{d}|=|b|$. Hence 
\begin{align*}|G_{\gamma,\omega} (s)-1|=\dfrac{|\alpha \omega-\omega|}{|b||c\omega+d|}\leq  \dfrac{3^{-2}}{|b|}< \varepsilon, & &\mathrm{if} \ |b|>\frac{3^{-2}}{ \varepsilon}.
\end{align*}
Here the product $\Prod_{\gamma \in \mathcal{S}_1}G_{\gamma,\omega}(s)$ has norm $3$. So if we want to approximate the 
product $\Prod_{\gamma \in \widetilde{\Gamma}}G_{\gamma,\omega}(s)$ with precision at most $\varepsilon$, we need to consider the product restricted to the set $\mathcal{S}_{\varepsilon/3}$.

For $\varepsilon = 3^{-1}$, the computation of $u_\alpha \left( \frac{1}{T}\right)$ at precision at most $3^{-1}$ gives $u_\alpha \left(\frac{1}{T}\right)=\pi^{-1}+1+\pi +\vartheta_{1/T}$, with $\vartheta_{1/T} \in \K^*$ satisfying $|\vartheta_{1/T}|<3^{-1}$. For the other cusps we get
\begin{align*}
&u_\alpha(0)=2\pi^2+2\pi^3+\vartheta_0, & & \mbox{ with } |\vartheta_0|<3^{-3}, \\
&u_\alpha\left(\frac{1}{T^2}\right)=\pi^4+2\pi^5+\vartheta_{1/T^2} & & \mbox{ with } |\vartheta_{1/T^2}|<3^{-5}, \\
&u_\alpha \left(\frac{1}{T-1}\right)= \pi^{-2}+\pi^{-1}+\vartheta_{1/(T-1)}, & & \mbox{ with } |\vartheta_{1/(T-1)}|<1, \\
&u_\alpha\left( \frac{1}{T^2-T} \right)=\pi^3+\vartheta_{1/(T^2-T)}, & & \mbox{ with }  |\vartheta_{1/(T^2-T)}|<3^{-5}.
\end{align*}
This proves the statement.
\end{proof}
\begin{rem}
We can also provide an upper bound on the cardinal of the set $\mathcal{S}_{1/9}$ needed to compute 
$u_\alpha\left(\frac{1}{T}\right)$ at precision at
most $\varepsilon = 3^{-1}$. We have 
\begin{align*}
\card (\{ c\in \F_3[T],\deg(c) \leq 7 \ \mathrm{and} \ (T^3-T^2)\mid c \})=3^4,\\
\card \left(\{d \in \F_3[T], \deg(d)\leq 4 \ \mathrm{and} \ \pgcd(d,T^3-T^2)\in \F_3^*\}\right) \leq 3^4.\end{align*} So the number of 
pairs $(c,d)$ such that $(c,d)$ can be lifted to  a matrix in $\widetilde{\Gamma}$ is bounded by $3^8$. We also have 
$$
\card \left(\{a \in \F_3[T], \ \deg(a)\leq 7, \ a \ \mathrm{monic} \ \mathrm{and} \, \pgcd(a,T^3-T^2) \in \F_3^{*}\}\right)\leq 3^7.
$$ 
We conclude that $\card(\mathcal{S}_{1/9})\leq 3^{15}$.
\end{rem}

\bibliographystyle{apalike}
\bibliography{bibliographie}

\begin{thebibliography}{}

\bibitem[Breuil et~al., 2001]{breuil2001modularity}
Breuil, C., Conrad, B., Diamond, F., and Taylor, R. (2001).
\newblock On the modularity of elliptic curves over $\mathbb{Q}$: wild 3-adic
  exercises.
\newblock {\em Journal of the American Mathematical Society}, pages 843--939.

\bibitem[Brunault, 2016]{brunault2016ramification}
Brunault, F. (2016).
\newblock On the ramification of modular parametrizations at the cusps.
\newblock {\em Journal de Th{\'e}orie des Nombres de Bordeaux}, 28(3):773--790.

\bibitem[Chen, 2016]{chen2016computing}
Chen, H. (2016).
\newblock Computing the {M}azur and {S}winnerton-{D}yer critical subgroup of
  elliptic curves.
\newblock {\em Mathematics of Computation}, 85(301):2499--2514.

\bibitem[Cremona, 1995]{cremona1995computing}
Cremona, J. (1995).
\newblock Computing the degree of the modular parametrization of a modular
  elliptic curve.
\newblock {\em Mathematics of computation}, 64(211):1235--1250.

\bibitem[Cremona, 1997]{cremona97algo}
Cremona, J.~E. (1997).
\newblock {\em Algorithms for modular elliptic curves}.
\newblock Cambridge University Press, Cambridge, second edition.

\bibitem[Delaunay, 2003]{delaunay2003computing}
Delaunay, C. (2003).
\newblock Computing modular degrees using $ {L} $-functions.
\newblock {\em Journal de th{\'e}orie des nombres de Bordeaux}, 15(3):673--682.

\bibitem[Delaunay, 2005]{delaunay2005critical}
Delaunay, C. (2005).
\newblock Critical and ramification points of the modular parametrization of an
  elliptic curve.
\newblock {\em Journal de th{\'e}orie des nombres de Bordeaux}, 17(1):109--124.

\bibitem[Drinfel'd, 1974]{drinfel1974elliptic}
Drinfel'd, V.~G. (1974).
\newblock Elliptic modules.
\newblock {\em Mathematics of the USSR-Sbornik}, 23(4):561.

\bibitem[Gekeler, 1986]{gekelerdrinfeldmod}
Gekeler, E.-U. (1986).
\newblock {\em Drinfeld modular curves}, volume 1231 of {\em Lecture Notes in
  Mathematics}.
\newblock Springer-Verlag, Berlin.

\bibitem[Gekeler, 1995a]{gekeler1995analytical}
Gekeler, E.-U. (1995a).
\newblock Analytical construction of {W}eil curves over function fields.
\newblock {\em Journal de th{\'e}orie des nombres de Bordeaux}, 7(1):27--49.

\bibitem[Gekeler, 1995b]{gekeler1995improper}
Gekeler, E.-U. (1995b).
\newblock Improper {E}isenstein series on {B}ruhat-{T}its trees.
\newblock {\em manuscripta mathematica}, 86(1):367--391.

\bibitem[Gekeler, 1997a]{MR1630606}
Gekeler, E.-U. (1997a).
\newblock Jacquet-{L}anglands theory over {$K$} and relations with elliptic
  curves.
\newblock In {\em Drinfeld modules, modular schemes and applications
  ({A}lden-{B}iesen, 1996)}, pages 224--257. World Sci. Publ., River Edge, NJ.

\bibitem[Gekeler, 1997b]{MR1487469}
Gekeler, E.-U. (1997b).
\newblock On the cuspidal divisor class group of a {D}rinfeld modular curve.
\newblock {\em Doc. Math.}, 2:351--374.

\bibitem[Gekeler, 2000]{gekeler2000note}
Gekeler, E.-U. (2000).
\newblock A note on the finiteness of certain cuspidal divisor class groups.
\newblock {\em Israel Journal of Mathematics}, 118(1):357--368.

\bibitem[Gekeler and Nonnengardt, 1995]{gekeler1995fundamental}
Gekeler, E.-U. and Nonnengardt, U. (1995).
\newblock Fundamental domains of some arithmetic groups over function fields.
\newblock {\em International Journal of Mathematics}, 6(5):689--708.

\bibitem[Gekeler and Reversat, 1996]{gekelerrevjac96}
Gekeler, E.-U. and Reversat, M. (1996).
\newblock Jacobians of {D}rinfeld modular curves.
\newblock {\em J. Reine Angew. Math.}, 476:27--93.

\bibitem[Jacquet and Langlands, 1970]{jacquetLang}
Jacquet, H. and Langlands, R.~P. (1970).
\newblock {\em Automorphic forms on {${\rm GL}(2)$}}.
\newblock Lecture Notes in Mathematics, Vol. 114. Springer-Verlag, Berlin-New
  York.

\bibitem[P\'{a}l, 2007]{palontheeis}
P\'{a}l, A. (2007).
\newblock On the {E}isenstein ideal of {D}rinfeld modular curves.
\newblock {\em Int. J. Number Theory}, 3(4):557--598.

\bibitem[Papikian and Wei, 2016]{papikian2016eisenstein}
Papikian, M. and Wei, F.-T. (2016).
\newblock On the {E}isenstein ideal over function fields.
\newblock {\em Journal of Number Theory}, 161:384--434.

\bibitem[Rosen, 2002]{Rosennumberthff}
Rosen, M. (2002).
\newblock {\em Number theory in function fields}, volume 210 of {\em Graduate
  Texts in Mathematics}.
\newblock Springer-Verlag, New York.

\bibitem[Schweizer, 2011]{schweizerstrong11}
Schweizer, A. (2011).
\newblock Strong {W}eil curves over {$\Bbb F_q(T)$} with small conductor.
\newblock {\em J. Number Theory}, 131(2):285--299.

\bibitem[Serre, 2003]{serretrees}
Serre, J.-P. (2003).
\newblock {\em Trees}.
\newblock Springer Monographs in Mathematics. Springer-Verlag, Berlin.
\newblock Translated from the French original by John Stillwell, Corrected 2nd
  printing of the 1980 English translation.

\bibitem[Silverman, 1994]{silverman2013advanced}
Silverman, J.~H. (1994).
\newblock {\em Advanced topics in the arithmetic of elliptic curves}, volume
  151 of {\em Graduate Texts in Mathematics}.
\newblock Springer-Verlag, New York.

\bibitem[Taylor and Wiles, 1995]{taylor1995ring}
Taylor, R. and Wiles, A. (1995).
\newblock Ring-theoretic properties of certain {H}ecke algebras.
\newblock {\em Annals of Mathematics}, pages 553--572.

\bibitem[{The PARI~Group}, 2019]{PARI2}
{The PARI~Group} (2019).
\newblock {\em {PARI/GP version \texttt{2.11.2}}}.
\newblock Univ. Bordeaux.
\newblock available from \url{http://pari.math.u-bordeaux.fr/}.

\bibitem[Watkins, 2002]{watkins2002computing}
Watkins, M. (2002).
\newblock Computing the modular degree of an elliptic curve.
\newblock {\em Experimental Mathematics}, 11(4):487--502.

\bibitem[Watkins, 2012]{MR3098131}
Watkins, M. (2012).
\newblock Some remarks on {H}eegner point computations.
\newblock In {\em Explicit methods in number theory}, volume~36 of {\em Panor.
  Synth\`eses}, pages 81--97. Soc. Math. France, Paris.

\bibitem[Wuthrich, 2018]{wuthrichmodsymbol}
Wuthrich, C. (2018).
\newblock Numerical modular symbols for elliptic curves.
\newblock {\em Math. Comp.}, 87(313):2393--2423.

\bibitem[Zagier, 1985]{zagier1985modular}
Zagier, D. (1985).
\newblock Modular parametrizations of elliptic curves.
\newblock {\em Canadian Mathematical Bulletin}, 28(3):372--384.

\end{thebibliography}
\addcontentsline{toc}{section}{References}

\end{document}